\newcommand{\eqnum}{\refstepcounter{equation}\textup{\tagform@{\theequation}}}
\renewcommand{\boxtimes}{\mathbin{\scalerel*{\tikz{\draw[line width=1.1pt](0,0)rectangle(1,1)--(0,0)(1,0)--(0,1);}}{\otimes}}}
\tikzset{degil/.style={
		decoration={markings,
			mark= at position 0.5 with {
				\node[transform shape] (tempnode) {$\backslash$};
			}
		},
		postaction={decorate}
	}
}
      \theoremstyle{plain}
      \newtheorem{theorem}{Theorem}[section]
      \newtheorem{lemma}[theorem]{Lemma}
      \newtheorem{proposition}[theorem]{Propositition}
      \newtheorem{fact}[theorem]{Fact}
      \theoremstyle{definition}
      \theoremstyle{remark}
      \newtheorem{remark}[theorem]{Remark}
      \def\@setcopyright{}
      \def\serieslogo@{}
      \let\OLDthebibliography\thebibliography
      \renewcommand\thebibliography[1]{
      	\OLDthebibliography{#1}
      	\setlength{\parskip}{8pt}
      	\setlength{\itemsep}{0pt plus 0.3ex}
      }
\begin{document}

%



   \author{Spyridon Afentoulidis-Almpanis}
   
    \address{Dep. of Mathematics,
    	Bar-Ilan University,
    	Ramat-Gan, 5290002 Israel}

   \email{spyridon.almpanis@biu.ac.il}

   \author{Gang Liu}

    \address{Universit\' e de Lorraine, CNRS, IECL, F-57000 Metz, France}
 
   \email{gang.liu@univ-lorraine.fr}
   

   \title[Tensor product decomposition for rank-one spin groups]{Tensor product decomposition for rank-one spin groups  I : unitary principal series representations}


 \begin{abstract}
	We provide an explicit direct integral decomposition for the tensor product representation $\pi_1\widehat{\otimes}\pi_2$ of the rank one spin group $\mathrm{Spin}(n,1)$ whenever $\pi_1$ is a unitary principal series representation and $\pi_2$ is an arbitrary irreducible unitary representation of $\mathrm{Spin}(n,1)$. 
\end{abstract}

   \subjclass{22E30, 22E46}

   \keywords{unitary tensor product decomposition, rank one spin groups}

   \thanks{The first named author was supported by the Israel Science
   	Foundation (grant No. 1040/22). The 
   	second named author was partially supported by ANR-23-CE40-0016.}




   \maketitle



\section{Introduction}

	Suppose that $G$ is a real reductive group and $\pi_1$ and $\pi_2$ are irreducible unitary representations of $G$. Since $G$ is of Type I (in the sense of von Neumann algebras), the tensor product representation $\pi_1\widehat{\otimes}\pi_2$ of $G$ has an essentialy unique direct integral decomposition \cite{DixmierVonN}. The tensor product decomposition problem consists in describing this direct integral decomposition. It is of great importance in representation theory and abstract harmonic analysis as well as in theoretical physics, where tensor product decompositions are used to describe fusion rules in quantum mechanics.
	
	It is worthwhile to mention that the tensor product decomposition problem is a special case of the more general branching problems. More precisely, the decomposition of $\pi_1\widehat{\otimes}\pi_2$ is exactly the branching law of the exterior tensor product representation $\pi_1\widehat{\boxtimes}\pi_2$ of $G\times G$ with respect to the diagonally embedded subgroup $G$ of $G\times G$. 
	Branching problems have been extensively studied during the last 40 years, notably by T. Kobayashi and his collaborators \cite{koba1,koba2,koba3}. For more details regarding recent developments of the topic, we refer the interested reader to Kobayashi's Sugaku exposition article \cite{kobayashiRecent}.

In the case of compact Lie groups, where all unitary irreducible representations are finite-dimensional, the tensor product decomposition is known (see \cite{kumar} and references herein) while softwares that compute explicitly this decomposition have been successfully developed (e.g., \cite{Liesoft,atlas}). Nevertheless, for an arbitrary noncompact reductive group, and despite the progress made in the last 50 years (e.g., \cite{repka,martinSL,martin, vandijk,kobaShi,kobaLieTheory,benoist23}), few are, in general, known. 

Let us say a few words about some already known results concerning the tensor product decomposition problem. In \cite{repka}, Repka describes the decomposition of $\pi_1\widehat{\otimes}\pi_2$ for any unitary irreducible representations $\pi_1$ and $\pi_2$ of $\mathrm{SL}(2,\mathbb{R})$. In \cite{martinSL}, by using Mackey's theory, Martin finds a connection between the decomposition of $\pi_1\widehat{\otimes}\pi_2$ for $\mathrm{SL}(2,k)$ and restrictions of unitary irreducible representations of $\mathrm{SL}(2,k)$ to the Borel subgroup whenever $k=\mathbb{R}$ or $\mathbb{C}$. In \cite{martin}, Martin adapts his methods for $\mathrm{Spin}(4,1)$ and provides the decomposition of $\pi_1\widehat{\otimes}\pi_2$ whenever $\pi_1$ is a unitary principal series representation and $\pi_2$ is any nontrivial unitary irreducible representation of $\mathrm{Spin}(4,1)$. In \cite{vandijk}, Van Dijk and Molchanov describe the tensor product decomposition $\pi^+\widehat{\otimes}\pi^-$ for opposite maximal degenerate series representations $\pi^+$ and $\pi^-$ of $\mathrm{SL}(n,\mathbb{R})$.

In this article, we study the tensor product decomposition  for the rank one spin groups $\mathrm{Spin}(n, 1)$. More precisely, we provide an explicit decomposition for $\pi_1\widehat{\otimes}\pi_2$ whenever $\pi_1$ is a unitary principal series representation and $\pi_2$ is an arbitrary irreducible unitary representation of $\mathrm{Spin}(n, 1)$.

The groups $\mathrm{Spin}(n, 1)$ are interesting for several reasons. Firstly, according to the work of Kobayashi \cite{kobaShi}, for a real noncompact simple Lie group $G$, the multiplicities in $\pi_1\widehat{\otimes} \pi_2$ are finite for any irreducible smooth representations (of moderate growth) $\pi_1$ and $\pi_2$ of $G$ if and only if the Lie algebra of $G$ is $\mathfrak{so}(n, 1)$. Since the multiplicity for the direct integral of irreducible unitary representations is bounded by the multiplicity in the smooth category almost everywhere, the multiplicities in the tensor product representations $\pi_1\widehat{\otimes} \pi_2$  are finite for any irreducible unitary representations $\pi_1$ and $\pi_2$ of $\mathrm{Spin}(n, 1)$. Thus, in this sense, the tensor products of irreducible unitary representations of $\mathrm{Spin}(n, 1)$ behave nicely. Secondly, the groups $\mathrm{Spin}(n, 1)$ are also important in physics; these groups and their representations have been extensively studied and used in quantum physics (e.g., \cite{walkschaub}). Therefore, mathematical results concerning the tensor product decomposition for $\mathrm{Spin}(n, 1)$ can also be potentially useful for theoretical physics.

Now let us give a brief description of the methods we use to obtain the decomposition in the case that we are interested in. 
Let $G=\mathrm{Spin}(n, 1)$ and $P$ be a minimal parabolic subgroup of $G$. Let $\pi_1$ be a unitary principal series representation and $\pi_2$ be any nontrivial irreducible unitary representation of $G$.
First, we use an observation by Martin for the group $\mathrm{Spin}(4, 1)$ \cite{martin}, which reveals a certain relation between tensor products and branching laws with respect to parabolic subgroups. We adapt this observation for general $\mathrm{Spin}(n,1)$. Then based on the explicit decomposition of $\pi_2\vert_{P}$ obtained in \cite{gangoshima} and well-known results for finite-dimensional representations, we can prove that as a representation of $G$, the tensor product  $\pi_1\widehat{\otimes}\pi_2$ is isomorphic to a finite sum of certain induced representations "$\mathrm{Ind}_{P}^{G}(\Omega_{jkt})$". Thus, in order to decompose the tensor product  $\pi_1\widehat{\otimes}\pi_2$, it suffices to decompose each induced representation $\mathrm{Ind}_{P}^{G}(\Omega_{jkt})$. It turns out that all $\Omega_{jkt}$ are discrete series representations of $P$. This fact enables us to apply Anh-Mackey's Frobenius reciprocity to decompose explicitly $\mathrm{Ind}_{P}^{G}(\Omega_{jkt})$  based on the Plancherel formula for $G$. 

This article is the first paper of our project concerning the tensor product decomposition for $\mathrm{Spin}(n, 1)$. If neither $\pi_1$ nor $\pi_2$ is a unitary principal series representation, then the methods utilised in this paper cannot apply directly. However, some techniques used in this  article can still be adapted in some "nonunitary framework" especially for complementary series representations. This is an ongoing investigation, which we hope to finish soon.

The article is organized as follows: In Section \ref{preliminaries}, we present the necessary structure theory of $\mathrm{Spin}(n,1)$ and its (minimal) parabolic subgroup $P$ as well as the basic representation theory of $P$. In Section \ref{decomposition1}, we develop our methods and proceed with the tensor product decomposition of $\pi_1\widehat{\otimes}\pi_2$ whenever $\pi_1$ is a (unitary) principal series representation and $\pi_2$ is any nontrivial unitary irreducible representation of $\mathrm{Spin}(n,1)$. The main results of this article are contained in Theorem \ref{mainresult}. For the benefit of the reader, a short appendix with results from \cite{gangoshima} for branching rules of complementary series representations of $\mathrm{Spin}(n,1)$ to $P$ has been added at the end of the article.

\vspace{2mm}
\textbf{Acknowledgements:} The authors would like to thank T. Kobayashi for his useful comments and suggestions on a preliminary version of this article.

\section{Preliminaries}\label{preliminaries}
Let $m$ be a positive integer greater or equal to $3$ and set 
\begin{equation*}
	I_{m+1,1}\coloneqq
	\begin{pmatrix}
		I_{m+1}&\\
		&-1
		\end{pmatrix}
\end{equation*}
where $I_{m+1}$ is the $(m+1)$-by-$(m+1)$ identity matrix. 
Let $G$ be the (nontrivial) double cover $\mathrm{Spin}(m+1,1)$ of the group 

\begin{equation*}
	\mathrm{SO}(m+1,1)\coloneqq\{X\in M_{m+2}(\mathbb{R})\mid XI_{m+1,1}X^{t}=I_{m+1,1}\}
\end{equation*}
so that its Lie algebra $\mathfrak{g}$ is 
\begin{equation*}
	\mathfrak{g}=\mathfrak{so}(m+1,1)\coloneqq\{X\in \mathfrak{gl}(m+2,\mathbb{R})\mid XI_{m+1,1}+I_{m+1,1}X^t=0\}.
\end{equation*}
More details and an explicit construction of spin groups can be found in \cite{goodman}.
If 
\begin{equation*}
	\mathfrak{k}\coloneqq\{\mathrm{diag}(Y,0)\mid Y\in\mathfrak{so}(m+1)\}
\end{equation*}
and 
\begin{equation*}
	\mathfrak{p}\coloneqq\Big\{
	\begin{pmatrix}
		0_{(m+1)\times (m+1)}& \alpha^t\\
		\alpha&0
	\end{pmatrix}
	\in\mathfrak{g}\mid \alpha \in M_{1\times(m+1)}(\mathbb{R})
	\Big\}
	\end{equation*}
then $\mathfrak{g}=\mathfrak{k}\oplus\mathfrak{p}$ is a Cartan decomposition coming from the Cartan involution $\theta\coloneqq \mathrm{Ad}(I_{m+1,1})$.
The element 
\begin{equation*}
	H_0\coloneqq 
	\begin{pmatrix}
		0_{m\times m}& 0_{m\times1}& 0_{m\times1}\\
		0_{1\times m}& 0&1\\
		0_{1\times m}&1&0
		\end{pmatrix}
\end{equation*}
generates a maximal abelian subspace $\mathfrak{a}$ in $\mathfrak{p}$ and let $\mathfrak{m}\coloneqq Z_{\mathfrak{k}}(\mathfrak{a})$. The set of restricted roots $\Sigma(\mathfrak{g},\mathfrak{a})$ consists of $\pm\lambda_0$ where $\lambda_0\in \mathfrak{a}^*$ is defined by $\lambda_0(H_0)=1$ and let $\lambda_0$ be the positive restricted root. The corresponding positive and negative nilradicals $\mathfrak{n}$ and $\bar{\mathfrak{n}}$ are, then, 
\begin{equation*}
	\mathfrak{n}=\big\{
	\begin{pmatrix}
		0_{m\times m}&-\alpha^t&\alpha^t\\
		\alpha& 0&0\\
		\alpha&0&0
	\end{pmatrix} \mid \alpha\in M_{1\times m}(\mathbb{R})    \big\}
\end{equation*}
and 
\begin{equation*}
	\bar{\mathfrak{n}}=\big\{
	\begin{pmatrix}
		0_{m\times m}&\alpha^t&\alpha^t\\
		-\alpha& 0&0\\
		\alpha&0&0
	\end{pmatrix} \mid \alpha\in M_{1\times m}(\mathbb{R})    \big\}
\end{equation*}
respectively. The element
\begin{equation*}
	X_{\alpha}\coloneqq 
	\begin{pmatrix}
			0_{m\times m}&-\alpha^t&\alpha^t\\
		\alpha& 0&0\\
		\alpha&0&0
	\end{pmatrix},
\end{equation*}
where $\alpha\in M_{1\times m}(\mathbb{R})$, is a positive  restricted root vector of $\lambda_0$ while the element 
\begin{equation*}
	\bar{X}_{\alpha}\coloneqq 
	\begin{pmatrix}
		0_{m\times m}&\alpha^t&\alpha^t\\
		-\alpha& 0&0\\
		\alpha&0&0
	\end{pmatrix},
\end{equation*}
is a negative restricted root vector of $-\lambda_0$.
The element
\begin{equation*}
	\rho'\coloneqq \frac{m}{2}\lambda_0\in\mathfrak{a}^*
\end{equation*}
is half the sum of the positive restricted roots of $\Sigma(\mathfrak{g},\mathfrak{a})$.

Let $n'\coloneqq [\frac{m+1}{2}]$. For every $a=(a_1,\ldots,a_{n'})\in\mathbb{R}^{n'}$, set 
\begin{equation*}
	t_a\coloneqq 
	\begin{pmatrix}
		0&a_1&&&&\\
		-a_1&0&&&&\\
		&&\ddots&&&\\
		&&&0&a_{n'}&\\
		&&&-a_{n'}&0&\\
		&&&&&0{(m+2-2n')\times (m+2-2n')}
	\end{pmatrix}
\end{equation*}
and $\mathfrak{t}\coloneqq \{t_a\mid a\in \mathbb{R}^{n'}\}$. Then $\mathfrak{t}$ is a Cartan subalgebra of $\mathfrak{k}$ and  $\mathfrak{t}_s\coloneqq (\mathfrak{t}\cap\mathfrak{m})\oplus\mathfrak{a}$ is a Cartan subalgebra of $\mathfrak{g}$. 

The root system $\Delta(\mathfrak{k}_\mathbb{C},\mathfrak{t}_\mathbb{C})$ of $\mathfrak{k}_\mathbb{C}$ with respect to $\mathfrak{t}_\mathbb{C}$ is 
\begin{equation*}
	\Delta(\mathfrak{k}_\mathbb{C},\mathfrak{t}_\mathbb{C})\coloneqq \{\pm\varepsilon'_i\pm\varepsilon'_j, \pm\varepsilon'_k\mid 1\leq i<j\leq n', 1\leq k\leq n' \}
\end{equation*}
if $m$ is even, and 
\begin{equation*}
	\Delta(\mathfrak{k}_\mathbb{C},\mathfrak{t}_\mathbb{C})\coloneqq \{\pm\varepsilon'_i\pm\varepsilon'_j\mid 1\leq i<j\leq n\}
\end{equation*}
if $m$ is odd, where 
$\varepsilon_i'\in \mathfrak{t}_\mathbb{C}^*$ is defined by 
\begin{equation*}
	\varepsilon'_j: t_a\mapsto ia_j.
\end{equation*}

The root system of $\mathfrak{g}_\mathbb{C}$ with respect to $(\mathfrak{t}_s)_{\mathbb{C}}$ is 
\begin{equation*}
	\Delta(\mathfrak{g}_\mathbb{C},(\mathfrak{t}_s)_\mathbb{C})\coloneqq \{\pm\varepsilon_i\pm\varepsilon_j\mid 1\leq i<j\leq n\},
\end{equation*}
if $m$ is even, and 
\begin{equation*}
	\Delta(\mathfrak{g}_\mathbb{C},(\mathfrak{t}_s)_\mathbb{C})\coloneqq \{\pm\varepsilon_i\pm\varepsilon_j, \pm\varepsilon_k\mid 1\leq i<j\leq n, 1\leq k\leq n \},
\end{equation*}
if $m$ is odd,
where $n\coloneqq [\frac{m+2}{2}]$ and $\varepsilon_i$ is defined by 
\begin{equation*}
	\varepsilon_i=\varepsilon_i' \text{ on } \mathfrak{t}\cap \mathfrak{m} \text{ and } \varepsilon_i=0 \text{ on } \mathfrak{a}
\end{equation*}
if $1\leq i <n$, and 
\begin{equation*}
	\varepsilon_n=0 \text{ on } \mathfrak{t}\cap \mathfrak{m} \text{ and } \varepsilon_n=\lambda_0 \text{ on } \mathfrak{a}.
\end{equation*}
An element $\gamma$ of $(\mathfrak{t}_s)_{\mathbb{C}}^*$ can be expressed as a sum
\begin{equation*}
	\gamma=c_1\varepsilon_1+\ldots+c_n\varepsilon_n=:(c_1,\ldots, c_n)
\end{equation*}
of the elements $\varepsilon_i$ defined above,
or as a sum 
\begin{equation*}
	\gamma=\mu+\nu
\end{equation*}
of two elements $\mu\in(\mathfrak{t}\cap\mathfrak{m})_\mathbb{C}^*$ and $\nu\in\mathfrak{a}_\mathbb{C}^*$
where 
\begin{equation*}
	\mu=c_1\varepsilon_1+\ldots+c_{n-1}\varepsilon_{n-1}
\end{equation*}
and 
\begin{equation*}
	\nu\coloneqq c_n\varepsilon_n.
\end{equation*}
We will say that an element $\gamma\in (\mathfrak{t}_s)^*_\mathbb{C}$ is integer (respectively half-integer) if all $c_i$'s are integer (respectively half-integer).

The maximal compact subgroup $K$ of $G$ determined by the Cartan involution $\theta$
 is exactly the spin double cover of the maximal compact subgroup 
 \begin{equation*}
 	K_1\coloneqq \{\mathrm{diag}(g,1)\mid g\in \mathrm{SO}(m+1)\}\cong \mathrm{SO}(m+1)
 \end{equation*}
of $\mathrm{SO}(m+1,1)$
so that $K\cong\mathrm{Spin}(m+1)$.
Let $A$, $N$ and $\bar{N}$ be the connected analytic subgroups of $G$ with corresponding Lie algebras $\mathfrak{a}$, $\mathfrak{n}$ and $\mathfrak{\bar{n}}$ respectively. Then, the closed subgroup
\begin{equation*}
	M\coloneqq Z_{K}(A)
	\end{equation*}
of $G$ is the spin double cover of the compact subgroup 
\begin{equation*}
	M_1\coloneqq \{\mathrm{diag}(g,I_{2})\mid g\in \mathrm{SO}(m)\}\cong\mathrm{SO}(m)
\end{equation*}
of $\mathrm{SO}(m+1,1)$ so that $M\cong\mathrm{Spin}(m)$. Let $P\coloneqq MAN$ and $\bar{P}\coloneqq MA\bar{N}$ be the corresponding minimal parabolic subgroups of $G$. 

Under the identification $ \mathbb{R}^m\cong \mathfrak{n}$ given by 
\begin{equation*}
	\alpha\in\mathbb{R}^m\mapsto X_{\alpha},
\end{equation*} we obtain a natural identification $\mathfrak{n}^*\cong \mathbb{R}^m$.  Fix an element
$\xi_0\in\mathfrak{n}^*$ such that
\begin{equation*}
	\xi_0=(0,\ldots,0,1)
\end{equation*}
via this identification,
and set $M'\coloneqq \mathrm{Stab}_{MA}(\xi_0)$. We note that $M'$ is the spin double cover of the subgroup 
\begin{equation*}
	M_1'\coloneqq \{\mathrm{diag}(g,I_3)\mid g\in\mathrm{SO}(m-1))\}
\end{equation*}
of $\mathrm{SO}(m+1,1)$ so that $M'\cong \mathrm{Spin}(m-1)$ while the Lie algebra $\mathfrak{m}'$ of $M'$ is isomorphic to  $\mathfrak{so}(m-1,\mathbb{R})$.

 Consider a unitary representation $(\tau,V_\tau)$ of $M'$ and let
\begin{equation*}
	I_{P,\tau}\coloneqq \mathrm{Ind}_{M'\ltimes N}^P(\tau\otimes e^{i\xi_0})
\end{equation*}
be the unitarily induced representation consisting of measurable functions $h:M'N\rightarrow V_\tau$ such that 

	\begin{itemize}
	\item[(a)] $h(pm'n)=(\tau\otimes e^{i\xi_0})(m',n)^{-1}h(p) $ for all $p\in P$, $m'\in M'$ and $n\in N$,
	\item[(b)] $\langle h,h\rangle <\infty$
\end{itemize}
where, for $h_1,h_2\in I_{P,\tau}$,
\begin{equation*}
	\langle h_1,h_2\rangle\coloneqq \int_{MA/M'}\langle h_1(ma),h_2(ma)\rangle_\tau d_lma.
\end{equation*}
Here $\langle\cdot,\cdot\rangle_{\tau}$ stands for the $M'$-invariant form of $V_\tau$ and $d_lma$ is a left $(MA)$-invariant measure on the homogeneous space $MA/M'$. As usual, the left $P$-action on a function $h\in I_{P,\tau}$ is given by left-translation.

\begin{proposition}[\cite{mackeylittlegroup,wolfpar}]\label{Pdiscrete}
	Any infinite-dimensional irreducible unitary representation of $P$ is isomorphic to $I_{P,\tau}$ for a unique (up to isomorphism) irreducible finite-dimensional unitary representation $\tau$ of $M'$. Moreover, all the $I_{P,\tau}$ are discret series representations of P.
\end{proposition}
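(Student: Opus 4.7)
The plan is to apply Mackey's ``little group'' method \cite{mackeylittlegroup} to the semidirect product structure $P=(MA)\ltimes N$. Since the restricted root system of $(\mathfrak g,\mathfrak a)$ has no doubled root, $\mathfrak n$ is abelian, so $N$ is abelian and the identification $\mathfrak n\cong\mathbb R^m$ from the text gives $\widehat N \cong \mathfrak n^*\cong\mathbb R^m$ via $\xi\mapsto e^{i\xi}$. Mackey's theorem then asserts that every irreducible unitary representation of $P$ is obtained by taking an $MA$-orbit $\mathcal O\subset\widehat N$, a base point $\xi\in\mathcal O$, and an irreducible unitary representation of the stabilizer $(MA)_\xi$, and that this data determines the representation uniquely up to isomorphism (independently of the choice of base point).

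Next I would compute the $MA$-orbits on $\widehat N$. The group $M\cong\mathrm{Spin}(m)$ acts through the standard $\mathrm{SO}(m)$-representation, which is transitive on spheres, and $A=\exp(\mathbb R H_0)$ acts by positive dilation since $\mathfrak n$ is the single root space $\mathfrak g_{\lambda_0}$. Hence there are exactly two orbits on $\widehat N$: the trivial orbit $\{0\}$ and the open orbit $\mathbb R^m\setminus\{0\}$, with stabilizers $MA$ and $M'=\mathrm{Stab}_{MA}(\xi_0)$ respectively. The trivial orbit corresponds precisely to those irreducible unitary representations of $P$ trivial on $N$, i.e.\ pulled back from $MA$; since $M$ is compact and $A\cong\mathbb R$, every irreducible unitary representation of $MA$ is a tensor product of a finite-dimensional irreducible of $M$ with a unitary character of $A$, hence finite-dimensional. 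Thus all infinite-dimensional irreducibles of $P$ come from the open orbit.

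On the open orbit, Mackey's correspondence produces a bijection between irreducible unitary representations of $M'\cong\mathrm{Spin}(m-1)$ (all finite-dimensional since $M'$ is compact) and the remaining irreducible unitary representations of $P$. Unwinding the Mackey construction, the representation attached to $\tau$ is precisely
\begin{equation*}
\mathrm{Ind}_{M'\ltimes N}^{P}\bigl(\tau\otimes e^{i\xi_0}\bigr)=I_{P,\tau},
\end{equation*}
and uniqueness of $\tau$ up to isomorphism is built into the classification. The only point requiring a check is the Mackey regularity hypothesis (that the orbit space be countably separated), which is automatic here since the $MA$-action on $\widehat N$ is algebraic with only two orbits, one of which is open.

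For the discrete series assertion I would invoke Wolf's results on cuspidal/minimal parabolic subgroups \cite{wolfpar}, which show that the Mackey-induced representations attached to an open orbit of full Plancherel measure are square-integrable modulo the relevant subgroup and hence constitute the discrete series of $P$; in our situation the open orbit $\mathbb R^m\setminus\{0\}$ has full Lebesgue measure in $\widehat N\cong\mathbb R^m$, so each $I_{P,\tau}$ embeds discretely in the Plancherel decomposition of $P$. I expect this last step to be the main conceptual obstacle: the Mackey classification itself is essentially formal once the orbits and stabilizers are identified, whereas the discrete-series property genuinely requires input from the harmonic analysis of exponential-solvable-by-compact groups as developed in \cite{wolfpar}.
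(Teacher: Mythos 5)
Your argument follows exactly the route the paper itself implicitly takes: the paper offers no proof of this proposition beyond citing Mackey \cite{mackeylittlegroup} for the classification and Keene--Lipsman--Wolf \cite{wolfpar} for the discrete-series assertion, together with Remark \ref{remarkdiscrete} pointing to Duflo's orbit method. Your unpacking of Mackey's machinery is correct in all particulars: $N$ is abelian because $\Sigma(\mathfrak g,\mathfrak a)=\{\pm\lambda_0\}$ has no doubled root, $\widehat N\cong\mathbb R^m$, the $MA$-action on $\widehat N\setminus\{0\}$ is transitive (rotations from $M$ plus dilations from $A$) with stabilizer $M'\cong\mathrm{Spin}(m-1)$, the two-orbit picture makes the regularity hypothesis automatic, the trivial orbit yields exactly the finite-dimensional irreducibles pulled back from $MA$, and the open orbit yields the $I_{P,\tau}$.

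The one place where your reasoning is slightly off-target is the discrete-series step. You frame the criterion as "open orbit of full Plancherel measure," but openness of the orbit alone is not sufficient. The correct criterion (in the Lipsman--Wolf circle of ideas, and visible in the paper's Remark \ref{remarkdiscrete} through the phrase "regular coadjoint orbits with \emph{compact} stabilizers") is that the $MA$-orbit of $\xi_0$ be open \emph{and} the stabilizer $(MA)_{\xi_0}=M'$ be compact. You establish the compactness of $M'\cong\mathrm{Spin}(m-1)$ earlier in your argument but then do not invoke it at the point where it is actually doing the work; as written, your discrete-series claim would fail for other minimal parabolics $MAN$ where the generic character of $N$ has an open $MA$-orbit but a noncompact stabilizer. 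Making the compactness of $M'$ explicit at that step closes the only real gap.
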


\begin{remark}\label{remarkdiscrete}
	The fact that all the $I_{P, \tau}$'s are discret series representations of P can also be seen from Duflo's orbit method. Actually all the $I_{P,\tau}$'s are associated to regular coadjoint orbits with compact stabilizers.
	\end{remark}

\section{Decomposition of the tensor product}\label{decomposition1}

 In this section, we provide the direct integral decomposition of the tensor product representation $\pi_1\widehat{\otimes}\pi_2$ in the case where $\pi_1$ is a unitary pricipal series representation and $\pi_2$ is any nontrivial unitary irreducible representation of $G=\mathrm{Spin}(m+1,1)$.

Consider such a unitary principal series representation 
\begin{equation*}
	\pi_1=\mathrm{Ind}_{P}^G\big(\sigma\otimes e^{\nu}\otimes 1_{N}\big),
\end{equation*}
where $\sigma\in \widehat{M}$, i.e., $\sigma$ is a finite dimensional irreducible representation of $M$, say of highest weight $\mu\in(\mathfrak{t}\cap\mathfrak{m})^*_\mathbb{C}$, and  $e^{\nu}$ is a unitary character of $A$, i.e., $\nu\in i\mathfrak{a}^*$ and $de^\nu=\nu$.

For the sake of completeness, let us recall that we define the induced representation 
\begin{equation*}
	\mathrm{Ind}_{P}^G(\sigma\otimes e^{\nu}\otimes 1)=:I(\mu,\nu)
\end{equation*}
as the space of measurable functions $h:G\rightarrow V_\sigma$ satisfying the invariant condition 
\begin{equation*}
	h(gman)=\sigma(m)^{-1}e^{(-\nu-\rho')(\log a)}h(g), \hspace{3mm}\forall g\in G, m\in M, n\in N,
	\end{equation*}
where, as above, $\rho'$ stands for the half-sum of positive restricted roots of $\Sigma(\mathfrak{g},\mathfrak{a})$, and 
\begin{equation*}
	h\vert_K\in L^2(K).
\end{equation*}

Let $\pi_2$ be any nontrivial unitary irreducible representation of $G$. Then $\pi_2$ can be expressed as the induced representation $\mathrm{Ind}_G^G(\pi_2)$ consisting of all functions $h:G\rightarrow V_{\pi_2}$ satisfying 
\begin{equation*}
	h(gx)=x^{-1}(h(g)), \text{ for all } g,x\in G.
\end{equation*}
One can easily see that $\pi_2\cong \mathrm{Ind}_G^G(\pi_2)$. Indeed, the evaluation map 
\begin{equation*}
	\mathrm{ev}_1:\mathrm{Ind}_G^{G}(\pi_2)\rightarrow \pi_2
\end{equation*}
where 
\begin{equation*}
	\mathrm{ev}_1(h)\coloneqq h(1), \quad h\in\mathrm{Ind}_G^G(\pi_2)
\end{equation*}
is a $G$-equivariant isomorphism.

Then
\begin{align*}
	\pi_1\widehat{\otimes}\pi_2&=\mathrm{Ind}^G_{P}(\sigma\otimes e^{\nu}\otimes 1) \widehat{\otimes}\pi_2 \\
	&=\mathrm{Ind}_{P}^{G}(\sigma\otimes e^{\nu}\otimes 1)\widehat{\otimes}\mathrm{Ind}_G^G(\pi_2)\\
\end{align*}

\begin{lemma}\label{lemmatensor}
	As unitary $G$-representations, we have 
	\begin{equation}\label{stop1}
		\mathrm{Ind}^G_P(\sigma\otimes e^{\nu}\otimes 1)\widehat{\otimes}\mathrm{Ind}_G^G(\pi_2)\cong \mathrm{Ind}_{P}^{G}\big((\sigma\otimes e^{\nu}\otimes 1)\widehat{\otimes}(\pi_2)_{\vert P}\big).
	\end{equation}
\end{lemma}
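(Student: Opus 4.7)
The plan is to construct an explicit $G$-equivariant unitary intertwiner realizing the projection formula $\mathrm{Ind}_H^G(V)\widehat{\otimes}W\cong \mathrm{Ind}_H^G(V\widehat{\otimes}W\vert_H)$. On the algebraic tensor product I would define
\begin{equation*}
\Phi(f\otimes w)(g)\coloneqq f(g)\otimes \pi_2(g^{-1})w,\qquad g\in G,
\end{equation*}
for $f\in\mathrm{Ind}_P^G(\sigma\otimes e^{\nu}\otimes 1)$ and $w\in V_{\pi_2}$, and extend by linearity. Using $\pi_2((gman)^{-1})=\pi_2((man)^{-1})\pi_2(g^{-1})$ together with the covariance of $f$, a direct computation gives
\begin{equation*}
\Phi(f\otimes w)(gman)=\sigma(m)^{-1}e^{(-\nu-\rho')(\log a)}\pi_2(man)^{-1}\Phi(f\otimes w)(g),
\end{equation*}
which is precisely the covariance defining $\mathrm{Ind}_P^G\bigl((\sigma\otimes e^{\nu}\otimes 1)\widehat{\otimes}\pi_2\vert_P\bigr)$. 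The $G$-equivariance of $\Phi$ reduces to the identity $\pi_2(g^{-1})\pi_2(g')w=\pi_2((g'^{-1}g)^{-1})w$ applied in the second tensor slot.

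Next I would verify that $\Phi$ is an isometry on finite sums, so that it extends to the Hilbert space completion. Passing to the compact picture, each side is determined by its restriction to $K$ subject to the appropriate $M$-covariance, with norm $\int_K\|\cdot(k)\|^2\,dk$. Since $\mathrm{id}\otimes\pi_2(k^{-1})$ is unitary on $V_\sigma\otimes V_{\pi_2}$ for every $k\in K$, for $\Xi=\sum_i f_i\otimes w_i$ one obtains
\begin{equation*}
\|\Phi(\Xi)\|^2=\int_K\Bigl\|\textstyle\sum_i f_i(k)\otimes\pi_2(k^{-1})w_i\Bigr\|^2 dk=\int_K\Bigl\|\textstyle\sum_i f_i(k)\otimes w_i\Bigr\|^2 dk=\|\Xi\|^2.
\end{equation*}
Hence $\Phi$ extends uniquely to an isometric $G$-map on the completed tensor product.

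To conclude bijectivity, I would exhibit a candidate inverse $\Psi(F)(g)\coloneqq(\mathrm{id}\otimes\pi_2(g))F(g)$; using the $P$-covariance of $F$ one checks that $\Psi(F)(gman)=\sigma(m)^{-1}e^{(-\nu-\rho')(\log a)}\Psi(F)(g)$, so $\Psi(F)$ is a $V_{\pi_2}$-valued section of $\mathrm{Ind}_P^G(\sigma\otimes e^{\nu}\otimes 1)$, and $\Psi\circ\Phi=\mathrm{id}$ on elementary tensors by the computation $(\mathrm{id}\otimes\pi_2(g))(f(g)\otimes\pi_2(g^{-1})w)=f(g)\otimes w$. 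The main technical point I anticipate is being careful with the completed tensor product structure: the identification I rely on is the standard isomorphism $L^2(K)\widehat{\otimes}H\cong L^2(K;H)$ for a separable Hilbert space $H$, which allows me to identify $\mathrm{Ind}_P^G(\sigma\otimes e^{\nu}\otimes 1)\widehat{\otimes}V_{\pi_2}$ with the space of $L^2$ sections on $K$ valued in $V_\sigma\otimes V_{\pi_2}$ transforming by $\sigma(m)^{-1}\otimes\mathrm{id}$ under right $M$-translation. Once this identification is in place, the pointwise construction of $\Phi$ and $\Psi$ propagates to the Hilbert completion and yields the claimed unitary $G$-isomorphism.
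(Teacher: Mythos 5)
Your proof is correct, but it takes a genuinely different route from the paper. The paper invokes Mackey's double-coset theorem for tensor products of induced representations (Theorem 7.2 of Mackey's 1952 Annals paper): with $H_1=P$, $H_2=G$, there is only the single double coset $G$, so the formula collapses to precisely the claimed isomorphism. You instead give a direct, self-contained proof of what is usually called the projection formula, by constructing the explicit unitary intertwiner $\Phi(f\otimes w)(g)=f(g)\otimes\pi_2(g^{-1})w$ and its inverse $\Psi(F)(g)=(\mathrm{id}\otimes\pi_2(g))F(g)$, checking $P$-covariance, $G$-equivariance, and isometry in the compact picture using $L^2(K)\widehat{\otimes}H\cong L^2(K;H)$. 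Your approach is effectively a direct verification of the special case of Mackey's theorem that the paper cites; it is more elementary and transparent (the reader sees the intertwiner rather than trusting a black-box theorem about discretely related subgroups), whereas the paper's argument is shorter and uniform with the second application of the same Mackey theorem a few lines later in \eqref{stop2}, where the double-coset analysis is no longer trivial and the explicit-intertwiner method would require more work. One small stylistic point: when you write that $\Psi(F)$ is ``a $V_{\pi_2}$-valued section of $\mathrm{Ind}_P^G(\sigma\otimes e^\nu\otimes 1)$,'' it would be cleaner to say it lands in $L^2(K;V_\sigma\otimes V_{\pi_2})$ with $M$-covariance only in the $V_\sigma$ slot, i.e.\ the realization of $\mathrm{Ind}_P^G(\sigma\otimes e^\nu\otimes 1)\widehat{\otimes}V_{\pi_2}$ you set up; the computation is right, just the phrasing is slightly loose.
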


For the proof of Lemma \ref{lemmatensor}, we will use \cite[Theorem 7.2]{mackeyannals1} as described in what follows.
Let $G'$ be a locally compact group and $H_1$ and $H_2$ be closed subgroups of $G'$ such that they are discretely related, i.e., there exist countably many elements $x_1,x_2,\ldots$ such that 
\begin{equation*}
	\vert G'\setminus \bigcup\limits_{i\geq 1} H_1x_iH_2\vert =0,
\end{equation*}
where $\vert \cdot\vert$ stands for the Haar measure on $G'$. If $L_1$ and $L_2$ are representations of $H_1$ and $H_2$ respectively, then 
\begin{equation*}
	\mathrm{Ind}_{H_1}^{G'}\big( L_1\big)\widehat{\otimes}\mathrm{Ind}_{H_2}^{G'}\big(L_2\big)\cong \bigoplus\limits_{F(x,y)\in\mathcal{F}}\mathrm{Ind}^{G'}_{x^{-1}H_1x\cap y^{-1}H_2y}(N^{x,y})
\end{equation*}
where $\mathcal{F}$ is the set of double cosets $H_1xy^{-1}H_2$ with $\vert H_1xy^{-1}H_2\vert \neq 0$, $F(x,y)\coloneqq H_1xy^{-1}H_2$, and $N^{x,y}$ is the representation of $x^{-1}H_1x\cap y^{-1}H_2y$ given by
\begin{equation*}
	x^{-1}H_1x\cap y^{-1}H_2y\ni s\mapsto (L_1)_{xsx^{-1}}\otimes (L_2)_{ysy^{-1}}. 
\end{equation*}
We note that the above construction is well-defined in the sense that $N^{x,y}$ is determined up to unitary equivalence by $F(x,y)$.

\begin{proof}[Proof of Lemma \ref{lemmatensor}]
	In the above Mackey's result, set $G'\coloneqq G=\mathrm{Spin}(m+1,1)$, $H_1\coloneqq P$, $H_2\coloneqq G$, $L_1\coloneqq\sigma\otimes e^{\nu}\otimes 1$ and $L_2\coloneqq \pi_2$. Since $Pxy^{-1}G=G$  for every $x,y\in G$, there is a unique coset, so that $\mathcal{F}=\{G\}$ and 
	\begin{equation*}
			\mathrm{Ind}_{P}^{G}( \sigma\otimes e^{\nu}\otimes 1)\widehat{\otimes}\mathrm{Ind}_{G}^{G}(\pi_2)\cong \mathrm{Ind}^{G}_{x^{-1}Px\cap y^{-1}Gy}(N^{x,y})
	\end{equation*}
for any $(x,y)\in G\times G$. Choose $(x,y)=(1,1)$ so that $x^{-1}Px\cap y^{-1}Gy=P$ and the representation $N^{x,y}=N^{1,1}$ of $P$ is
\begin{equation*}
	N^{1,1}=(\sigma \otimes e^{\nu}\otimes 1)\widehat{\otimes}(\pi_2)_{\vert P}.
	\end{equation*}
Hence 
\begin{equation*}
	\mathrm{Ind}_{P}^{G}( \sigma\otimes e^{\nu}\otimes 1)\widehat{\otimes}\mathrm{Ind}_{G}^{G}(\pi_2)\cong \mathrm{Ind}^{G}_{P}\big((\sigma \otimes e^{\nu}\otimes 1)\widehat{\otimes}(\pi_2)_{\vert P}\big).
	\end{equation*}
	\end{proof}

In \cite[Section 3]{gangoshima}, an explicit branching law for $\pi_2\vert_{P}$ is given for all $\pi_2\in \widehat{G}$. More precisely, when $\pi_2$ is  nontrivial, $\pi_2\vert_P$ decomposes into a finite direct sum of  discrete series representations of $P$
\begin{equation}\label{Pdecomposition}
	\pi_2\vert_{ P}\cong \bigoplus\limits_{j=1}^kT_j,
\end{equation}
where 
\begin{equation*}
	T_j\coloneqq \mathrm{Ind}_{M'N}^P(\tau_j\otimes e^{i\xi_0})
\end{equation*}
for some finite-dimensional irreducible representation $\tau_j$ of $M'$.
Recall that $\xi_0\coloneqq (0,\ldots,0,1)\in \mathfrak{n}^*$ and $M'\coloneqq \mathrm{Stab}_{MA}(\xi_0)\cong \mathrm{Spin}(m-1)$. We will give the explicit branching law of $\pi_2\vert_P$ for the cases we need later on. Let us note that recently Arends, Bang-Jensen and Frahm have also obtained the branching law of $\pi_2\vert_P$ by different methods \cite{arends}.

Let us now replace \eqref{Pdecomposition} in \eqref{stop1} to obtain
\begin{align}\label{stop3}
	\begin{split}
	\pi_1\widehat{\otimes}\pi_2&\cong \bigoplus\limits_{j=1}^k\mathrm{Ind}_P^G\big((\sigma\otimes e^{\nu}\otimes 1)\widehat{\otimes} T_j\big)\\
	&\cong\bigoplus\limits_{j=1}^k\mathrm{Ind}_P^G\big(\mathrm{Ind}_{P}^P(\sigma\otimes e^{\nu}\otimes 1)\widehat{\otimes}\mathrm{Ind}_{M'N}^{P}(\tau_j\otimes e^{i\xi_0})\big).
	\end{split}
\end{align}
By using again \cite[Theorem 7.2]{mackeyannals1}, we obtain that

\begin{align}\label{stop2}
	\begin{split}
&\mathrm{Ind}_{P}^P(\sigma\otimes e^{\nu}\otimes 1)
\widehat{\otimes}\mathrm{Ind}_{M'N}^{P}(\tau_j\otimes e^{i\xi_0})\\
&
\cong \mathrm{Ind}_{M'N}^P\big((\sigma\otimes e^{\nu}\otimes 1)_{\vert M'N}\otimes (\tau_j\otimes e^{i\xi_0})\big)\\
&\cong \mathrm{Ind}_{M'N}^P\big(\sigma_{\vert M'}\otimes (\tau_j\otimes e^{i\xi_0})\big)\\
&\cong \mathrm{Ind}_{M'N}^P\big((\sigma_{\vert M'}\otimes \tau_j)\otimes e^{i\xi_0})\big)
\end{split}
\end{align}

Let $(\sigma_1,\ldots,\sigma_{[\frac{m}{2}]})$ be the highest weight of the irreducible 
$M$-representation $\sigma$. According to \cite[Theorem 8.1.3, 8.1.4]{goodman},
\begin{equation}\label{finiterestr}
	\sigma_{\vert M'}\cong\bigoplus\limits_{\beta\in \mathcal{B}}F_{\beta}
\end{equation}
where 
$F_\beta$ is the irreducible finite-dimensional representation of highest weight $\beta\in\mathcal{B}$. 

If $m$ is odd,
	$\mathcal{B}$ is the (finite) set of all $\beta=(\beta_1,\ldots,\beta_{\frac{m-1}{2}})\in(\mathfrak{t}\cap\mathfrak{m}')^*$ such that
	\begin{itemize}
	 \item[(1)] $\beta$ is integer if $\sigma$ is integer or $\beta$ is half-integer if $\sigma$ is half-integer, and 
	 \item[(2)] 
	$	\sigma_1\geq \beta_1\geq\sigma_2\geq \ldots \geq \sigma_{\frac{m-3}{2}}\geq \beta_{\frac{m-3}{2}}\geq \sigma_{\frac{m-1}{2}}\geq \vert\beta_{\frac{m-1}{2}}\vert.$
\end{itemize}

If $m$ is even, $\mathcal{B}$ is the (finite) set of all
$\beta=(\beta_1,\ldots,\beta_{\frac{m-2}{2}})\in(\mathfrak{t}\cap\mathfrak{m}')^*$ such that
	\begin{itemize}
	\item[(1)] $\beta$ is integer if $\sigma$ is integer or $\beta$ is half-integer if $\sigma$ is half-integer, and 
 \item[(2)]
$
	\sigma_1\geq \beta_1\geq\sigma_2\geq \ldots \geq \sigma_{\frac{m-2}{2}}\geq \beta_{\frac{m-2}{2}}\geq \vert\sigma_{\frac{m}{2}}\vert.
$
\end{itemize}

We replace \eqref{finiterestr} in \eqref{stop2} to obtain
\begin{equation*}
	\mathrm{Ind}_{P}^P(\sigma\otimes e^{\nu}\otimes 1)\widehat{\otimes}\mathrm{Ind}_{M'N}^{P}(\tau_j\otimes e^{i\xi_0})\cong \bigoplus\limits_{\beta\in \mathcal{B}}\mathrm{Ind}_{M'N}^P\big((F_\beta\otimes \tau_j)\otimes e^{i\xi_0}\big)
\end{equation*}
and then from \eqref{stop3} we obtain
\begin{equation}\label{stop4}
	\pi_1\widehat{\otimes}\pi_2 \cong\bigoplus\limits_{j=1}^k\bigoplus\limits_{\beta\in\mathcal{B}}\mathrm{Ind}_P^G\big(\mathrm{Ind}_{M'N}^P\big((F_\beta\otimes\tau_j)\otimes e^{i\xi_0}\big)\big).
\end{equation}

Let us now have a closer look at the (finite-dimensional) tensor product representation $F_\beta\otimes\tau_j$ of $M'$. Assume that the irreducible representation $\tau_j$ has highest weight
\begin{equation*}
	\gamma_j=(\gamma_j^1,\ldots ,\gamma_j^{m'})\in(\mathfrak{t}\cap\mathfrak{m}')_\mathbb{C}^*,
\end{equation*}
where $m'\coloneqq [\frac{m-1}{2}]$. According to \cite[Corollary 3.6]{kumar}, 
\begin{equation}\label{partition}
	F_\beta\otimes\tau_j=\bigoplus\limits_{\delta\in(\mathfrak{t}\cap\mathfrak{m}')^*_{\mathbb{C},+}}n^j_\beta(\delta)F_{\delta}
\end{equation}
where 
 $(\mathfrak{t}\cap\mathfrak{m}')^*_{\mathbb{C},+}$ stands for the set of integral dominant elements $\delta$ of $(\mathfrak{t}\cap\mathfrak{m}')^*_\mathbb{C}$, $F_\delta$ for the irreducible finite-dimensional representation of highest weight $\delta\in(\mathfrak{t}\cap\mathfrak{m}')^*_{\mathbb{C},+}$, and
\begin{equation*}
	n^j_\beta(\delta)\coloneqq \sum\limits_{v,w\in W}\varepsilon(v)\varepsilon(w)\mathcal{P}(v(\gamma_j+\rho)-w(\delta+\rho)+\beta).
\end{equation*}
Here $\varepsilon(v)=(-1)^{\vert v\vert}$, where $\vert v\vert $ stands for the length of the Weyl group element $v$, and $\mathcal{P}(\cdot)$ stands for the Kostant partition function \cite[p. 315]{Knapp1}. Remark that $n_\beta^j(\delta)\in \mathbb{N}\cup\{0\}$ and it is nonzero only for a finite number of $\delta$'s.

Let us replace \eqref{partition} in \eqref{stop4} to obtain
\begin{equation}\label{thistensor}
	\pi_1\widehat{\otimes}\pi_2 \cong\bigoplus\limits_{j=1}^k\bigoplus\limits_{\beta\in\mathcal{B}}\bigoplus\limits_{\delta\in(\mathfrak{t}\cap\mathfrak{m}')^*_{\mathbb{C},+}}n_\beta ^j(\delta)\mathrm{Ind}_P^G\big(\mathrm{Ind}_{M'N}^P\big(F_\delta\otimes e^{i\xi_0}\big)\big).
\end{equation}
To simplify, set 
\begin{equation*}
	T_\delta\coloneqq  \mathrm{Ind}_{M'N}^P(F_\delta\otimes e^{i\xi_0})
\end{equation*}
for every $\delta\in (\mathfrak{t}\cap\mathfrak{m}')^*_{\mathbb{C},+}$. 
Then 
\begin{equation}\label{thistensor2}
	\pi_1\widehat{\otimes}\pi_2\cong \bigoplus\limits_{j=1}^k\bigoplus\limits_{\beta\in\mathcal{B}}\bigoplus\limits_{\delta\in(\mathfrak{t}\cap\mathfrak{m}')^*_{\mathbb{C},+}}n_\beta ^j(\delta)\mathrm{Ind}_P^G\big(T_\delta\big).
\end{equation}
It is clear that in order to decompose $\pi_1\widehat{\otimes}\pi_2$, it suffices to decompose $\mathrm{Ind}_P^G\big(T_\delta\big)$ for every $\delta\in (\mathfrak{t}\cap\mathfrak{m}')^*_{\mathbb{C},+}$. As we pointed out in Proposition \ref{Pdiscrete}, 
all $T_\delta$'s are discrete series representations of $P$. This fact will enable us to apply Anh-Mackey's Frobenious reciprocity which allows us to provide an explicit decomposition of $\mathrm{Ind}_P^G(T_\delta)$. 

In what follows, we will give the details about the decomposition of $\mathrm{Ind}_P^G(T_\delta)$.
%
Fix a representation $T_\delta$, $\delta\in(\mathfrak{t}\cap\mathfrak{m}')^*_{\mathbb{C},+}$, and let 
\begin{equation*}
\mathcal{A}_\delta\coloneqq \{\pi\in \widehat{G}_{\mathrm{temp}}\mid \mathrm{Hom}_P(T_\delta,\pi)\neq \{0\}\},
\end{equation*}
where $\widehat{G}_{\mathrm{temp}}$ stands for the tempered dual of $G$.

\begin{proposition}\label{lastdecomp}
	For every $\delta\in(\mathfrak{t}\cap\mathfrak{m}')^*_{\mathbb{C},+}$, 
	\begin{equation*}
		\mathrm{Ind}_P^G(T_\delta)\cong \int\limits_{\widehat{G}}^\oplus\chi_{\mathcal{A}_\delta}(\pi)\mathcal{H}_\pi dm_G(\pi),
	\end{equation*}
	where $m_G$ stands for the Plancherel measure of $G$.
\end{proposition}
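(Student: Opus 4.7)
The plan is to exploit the fact, recorded in Proposition \ref{Pdiscrete}, that each $T_\delta = \mathrm{Ind}_{M'N}^P(F_\delta\otimes e^{i\xi_0})$ is a discrete series representation of $P$, and to apply the Anh--Mackey Frobenius reciprocity. In the form needed here, this reciprocity asserts that for a square-integrable irreducible representation $\sigma$ of a closed subgroup $H$ of a type I locally compact group $G$,
\begin{equation*}
\mathrm{Ind}_H^G(\sigma)\ \cong\ \int_{\widehat{G}}^{\oplus} \dim\mathrm{Hom}_H(\sigma,\pi\vert_H)\cdot\mathcal{H}_\pi\, dm_G(\pi),
\end{equation*}
where $m_G$ is the Plancherel measure of $G$. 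Specializing to $H=P$, $G=\mathrm{Spin}(m+1,1)$, $\sigma = T_\delta$ reduces the proposition to computing the multiplicity $[\pi\vert_P:T_\delta] := \dim\mathrm{Hom}_P(T_\delta,\pi\vert_P)$ for $\pi$ ranging over the tempered dual.

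The second step is to pin down this multiplicity. Since $m_G$ is supported on $\widehat{G}_{\mathrm{temp}}$, it suffices to examine tempered $\pi$. I would invoke the branching law \eqref{Pdecomposition} of \cite{gangoshima}, which expresses $\pi\vert_P$ as a finite direct sum $\bigoplus_{j=1}^k T_{\tau_j}$ of discrete series of $P$, each summand labeled by a finite-dimensional irreducible $M'$-representation $\tau_j$. By Proposition \ref{Pdiscrete}, irreducible discrete series of $P$ are classified (up to equivalence) by the isomorphism class of their inducing $M'$-datum, so these summands are pairwise inequivalent. Consequently
\begin{equation*}
[\pi\vert_P:T_\delta]\ =\ \#\{\,j\ :\ \tau_j\cong F_\delta\,\}\ \in\ \{0,1\},
\end{equation*}
and by the very definition of $\mathcal{A}_\delta$ this integer equals $\chi_{\mathcal{A}_\delta}(\pi)$. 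Plugging this back into the Anh--Mackey formula, and noting that the integrand is automatically supported on $\widehat{G}_{\mathrm{temp}}$, yields the claimed identity
\begin{equation*}
\mathrm{Ind}_P^G(T_\delta)\ \cong\ \int_{\widehat{G}}^\oplus \chi_{\mathcal{A}_\delta}(\pi)\cdot \mathcal{H}_\pi\, dm_G(\pi).
\end{equation*}

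The main obstacle I expect is verifying that the discrete series summands $T_{\tau_j}$ appearing in $\pi\vert_P$ are pairwise distinct, equivalently, that the $\tau_j$ are pairwise non-isomorphic $M'$-representations. This amounts to reading off the highest weights of the $\tau_j$ from the explicit interlacing inequalities given in \cite{gangoshima} and checking that distinct admissible tuples yield distinct weights; this is case-by-case (integer vs.\ half-integer weights, $m$ even vs.\ $m$ odd, and the various $\Lambda_j$'s), but completely mechanical. A smaller technical point to address is that the integer multiplicity $[\pi\vert_P:T_\delta]$ coincides with the ``direct integral multiplicity'' that enters the Anh--Mackey formula; this is automatic here because the branching of $\pi\vert_P$ is a finite discrete direct sum, with no continuous spectrum. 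Once both points are settled, the argument reduces to a formal application of the Anh--Mackey reciprocity together with the support properties of $m_G$.
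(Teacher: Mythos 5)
Your proposal follows essentially the same route as the paper: apply the Anh--Mackey Frobenius reciprocity with $H'=P$ and $G'=G$ (relying on Proposition \ref{Pdiscrete} to know $T_\delta$ is a discrete series of $P$ and that $G,P$ are type I), then read off the multiplicity $n(x_\delta,\pi)=\chi_{\mathcal{A}_\delta}(\pi)$ from the finite, multiplicity-free branching law of \cite{gangoshima}. The one point you flag as an ``obstacle'' --- that the $\tau_j$ appearing in $\pi\vert_P$ are pairwise non-isomorphic --- is in fact already built into the statements in \cite{gangoshima}, where the decomposition is indexed over distinct highest-weight tuples $\tau$, so no further case-by-case verification is needed.
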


For the proof of Proposition \ref{lastdecomp}, we will use \cite[Corollary 1.10]{anh}, where the author establishes a Frobenius reciprocity as follows. Assume that  $G'$ is a locally compact separable topological group and $H'$ is a closed subgroup of $G'$ such that they are both of Type I (see \cite[p. 129]{kirillovnew}) and let $m_{G'}$ and $m_{H'}$ be the Plancherel measures of $G'$ and $H'$ respectively. If $\omega(x,y)$ and $n(x,y)$, $(x,y)\in\widehat{H'}\times\widehat{G'}$, are $m_{H'}\times m_{G'}$-measurable functions and $n(x,y)$ is a countable cardinal, then for $m_{H'}$-almost all $x$ 
\begin{equation*}
	\mathrm{Ind}_{H'}^{G'}\mathcal{V}^x\cong \int\limits_{\widehat{G'}}^\oplus (\mathcal{H}^y)^{\oplus n(x,y)}\omega(x,y)dm_{G'}(y)
\end{equation*}
if and only if for $m_{G'}$-almost all $y$
\begin{equation*}
	\mathcal{H}^y\vert_{H'}\cong\int\limits_{\widehat{H'}}^\oplus (\mathcal{V}^x)^{\oplus n(x,y)}\omega(x,y)dm_{H'}(x),
\end{equation*}
where $\mathcal{V}^x$ (respectively $\mathcal{H}^y$) stands for the Hilbert space of the unitary irreducible representation $x=(\phi_x,\mathcal{V}^x)$ of $ H'$ (respectively $y=(\pi_y,\mathcal{H}^y)$ of $G'$).

\begin{proof}[Proof of Proposition \ref{lastdecomp}] We will apply \cite[Corollary 1.10]{anh} for $G'\coloneqq G$ and $H'\coloneqq P$. We start by noting that $G$ and $P$ are almost-algebraic groups so that they are both of type I \cite{dixmiertype1}. Moreover, according to \cite{gangoshima}, for every $y=(\pi_y,\mathcal{V}^y)\in \widehat{G}$,
	\begin{equation*}
		\mathcal{V}^y\vert_P=\bigoplus\limits_{\delta \text{ s.t. } y\in\mathcal{A}_\delta}T_\delta
	\end{equation*}
	so that 
	\begin{equation*}
		n(x_\delta,y)=\omega(x_\delta,y)=
		\begin{cases}
			1 &\text{ if } \pi_y\in\mathcal{A}_\delta\\
			0 & \text{ otherwise}
		\end{cases}=\chi_{\mathcal{A}_\delta}(y)
	\end{equation*}
	where $x_\delta\in\widehat{P}$ corresponds to $T_\delta\coloneqq \mathrm{Ind}_{M'N}^P(\delta)$.
	Hence
	\begin{equation*}
		\mathrm{Ind}_P^G(T_\delta)\cong\int\limits_{\widehat{G}}^\oplus \omega(x_\delta,y) (\mathcal{H}^y)^{\oplus n(x_\delta,y)}dm_G(y)\cong \int\limits_{\widehat{G}}^\oplus \chi_{\mathcal{A}_\delta}(y)\mathcal{H}^ydm_G(y).
	\end{equation*}
\end{proof}

Let us consider the Plancherel decomposition \cite{knappstein1}
\begin{equation}\label{Plancherel}
	L^2(G)\cong\widehat{\bigoplus\limits_{y\in\widehat{G}_{\mathrm{ds}}}}\mathcal{H}^y\widehat{\otimes}(\mathcal{H}^y)' \oplus\widehat{\bigoplus\limits_{\phi\in \widehat{M}}}\int_{(0,\infty)}^\oplus  \mathcal{H}_{\phi,t\lambda_0}\widehat{\otimes}\mathcal{H}_{\phi,t\lambda_0}^*d\nu_\phi(t)
\end{equation}
where $\widehat{G}_{\mathrm{ds}}$ stands for the subset of $\widehat{G}_{\mathrm{temp}}$ consisting of the discrete series representations of $G$, $\mathcal{H}_{\phi,t\lambda_0}$ is the unitary principal series representation 
\begin{equation*}
\mathrm{Ind}_{P}^G\big(\phi\otimes e^{it\lambda_0}\otimes 1_{N}\big),
\end{equation*}
and $d\nu_\phi(t)$
 is explicitly described in \cite{rankonePlancherel} as follows.
 
 If $G=\mathrm{Spin}(2n,1)$ then 
\begin{equation*}
d\nu_\phi(t)\coloneqq d\nu_\phi^1(t)\coloneqq \mathrm{tanh}(t\pi)f_\phi(t)dt
\end{equation*}
if $\phi$ is integer, and 
\begin{equation*}
	d\nu_\phi(t)\coloneqq d\nu_\phi^1(t)\coloneqq \mathrm{coth}(t\pi)f_\phi(t)dt
\end{equation*}
if $\phi$ is half-integer, where $dt$ stands for the Lebesgue measure of $\mathbb{R}$. In both cases
\begin{equation*}
	f_\phi(t)\coloneqq \frac{t\mathrm{dim}(\phi)}{\pi^{n}\Gamma(n)}{\displaystyle\prod\limits_{i=1}^{n-1}(t^2+(a_i+n-i-\frac{1}{2})^2)}
\end{equation*}
with $(a_1,\ldots ,a_{n-1})$ being the highest weight of $\phi$. 

If $G=\mathrm{Spin}(2n-1,1)$ then there is no discrete series representations while
\begin{equation*}
	d\nu_\phi(t)\coloneqq \frac{\mathrm{dim}(\phi)}{\pi^{n-\frac{1}{2}}\Gamma(n-\frac{1}{2})}{\displaystyle\prod\limits_{i=1}^{n-1}(t^2+(a_i+n-i-1)^2)}dt
\end{equation*}
with $(a_1,\ldots ,a_{n-1})$ being the highest weight of $\phi$. As before $dt$ stands for the Lebesgue measure of $\mathbb{R}$. 

According to the Plancherel formula \eqref{Plancherel}, the tempered dual of $G$ consists essentially of discrete series and unitary principal series representations in the sense that the complement of these representations in $\widehat{G}$ is negligible with respect to the Plancherel measure $m_G$ of $G$. As a consequence, in order to determine $\mathcal{A}_\delta$ and then use Proposition \ref{lastdecomp} to compute explicitly the decomposition of $\mathrm{Ind}_P^G(T_{\delta})$, it suffices to determine the discrete series and the unitary principal series representations of $G$ belonging to $\mathcal{A}_{\delta}$. This comes down to the decomposition $\pi\vert_P$ for every discrete and unitary principal series representation $\pi$ of $G$. 
Let us describe these decompositions as given in \cite{gangoshima}.

Assume that $G=\mathrm{Spin}(2n,1)$. 
Then $M'=\mathrm{Spin}(2(n-1))$ and fix an integral dominant $\delta=(b_1,\ldots,b_{n-1})\in (\mathfrak{t}\cap \mathfrak{m}')^*_{\mathbb{C},+}$, i.e.,
\begin{equation*}
	b_1\geq \ldots \geq b_{n-2}\geq  \vert b_{n-1}\vert
\end{equation*}
with all $b_i$ integers or all half-integers. 

\vspace{2mm}
\textbf{Case I (discrete series representation):}

Let
\begin{equation*}
	\gamma=	(a_1+n-\frac{1}{2},\ldots, a_{n-1}+\frac{3}{2},a_n+\frac{1}{2})
\end{equation*}
with $a_1\geq \ldots\geq a_{n-1}\geq a_n\geq 0$ all integers or all half-integers.
Define $\pi_0^\pm(\gamma)$ to be the discrete series representation with lowest $K$-type $V_{K,\lambda^+}$, where 
\begin{equation*}
	\lambda^\pm\coloneqq (a_1+1,\ldots,a_{n-1}+1,\pm(a_n+\frac{1}{2}))\in\mathfrak{t}^*_\mathbb{C}.
\end{equation*}
Here $V_{K,\lambda^\pm}$ is the finite-dimensional irreducible representation of $K$ with highest weight $\lambda^\pm$. Note that $\gamma$ is the infinitesimal character of $\pi_0^\pm(\gamma)$.
According to \cite[Theorem 3.22]{gangoshima},
\begin{equation*}
	\pi_0^{\pm}(\gamma)\vert_{P}\cong \bigoplus\limits_{\tau}\mathrm{Ind}_{M'N}^P(V_{M',\tau}\otimes e^{i\xi_0})
\end{equation*}
where $\tau=(c_1,\ldots,c_{n-1})$ runs over tuples such that
\begin{equation*}\label{decompositionLambda0}
	a_1+1\geq c_1\geq a_{2}+1\geq\ldots \geq c_{n-2}\geq a_{n-1}+1\geq  \mp c_{n-1} \geq a_n+1
\end{equation*}
and $c_i-a_1\in\mathbb{Z}$. Hence, the representations $\pi_0^{\pm}(\gamma)$ contribute to $\mathcal{A}_\delta$ if and only if 
\begin{equation}\label{cnd1}
	\begin{array}{lc}
		&a_1+1\geq b_1\geq a_{2}+1\geq\ldots \geq b_{n-2}\geq a_{n-1}+1\geq  \mp b_{n-1} \geq a_n+1\\
		\text{ and }&
		b_i-a_1\in\mathbb{Z}. 
	\end{array}
\end{equation}
Denote by $\mathcal{D}_0(\delta)$ the set of all $\gamma$'s satisfying the above condition \eqref{cnd1}.

\vspace{2mm}
\textbf{Case II (unitary principal series representations):} \\
Let
\begin{equation*}
	\gamma=(a_1+n-\frac{3}{2},\ldots, a_{n-1}+\frac{1}{2},a_n)
\end{equation*}
where $a_1\geq \ldots\geq a_{n-1}\geq0$ and $a_n\in i\mathbb{R}$ and set 
\begin{equation*}
	\mu=(a_1,\ldots,a_{n-1})
\end{equation*}
and 
\begin{equation*}
	\nu=a_n\lambda_0.
\end{equation*}
Then
\begin{equation*}
	\pi(\gamma)\coloneqq \mathrm{Ind}_{MA\bar{N}}^G(V_{M,\mu}\otimes e^{\nu-\rho'}\otimes1_{\bar{N}})
\end{equation*}
is a unitary principal series representation with infinitesimal character $\gamma$.
According to \cite[Theorem 3.20]{gangoshima},
\begin{equation*}
	\pi(\gamma)\vert_{ P}\cong \bigoplus\limits_{\tau}\mathrm{Ind}_{M'N}^P(V_{M',\tau}\otimes e^{i\xi_0}),
\end{equation*}
where $\tau=(c_1,\ldots,c_{n-1})$ runs over tuples such that 
\begin{equation*}
	a_1\geq c_1\geq a_2\geq c_2\geq \ldots\geq a_{n-1}\geq \vert c_{n-1}\vert
\end{equation*}
and $c_i-a_1\in\mathbb{Z}$.
Hence $\pi(\gamma)$ contributes to $\mathcal{A}_\delta$ if and only if \begin{equation}\label{cnd2}
	\begin{array}{lc}
		&a_1\geq b_1\geq a_2\geq b_2\geq \ldots\geq a_{n-1}\geq \vert b_{n-1}\vert\\
		\text{ and } &b_i-a_1\in\mathbb{Z}
	\end{array}
\end{equation}
Denote by $\mathcal{C}_0(\delta)$ the set of all $(n-1)$-tuples $(a_1,\ldots, a_{n-1})$ satisfying the above condition \eqref{cnd2}.

\begin{proposition}\label{thisprop}
	Let $G=\mathrm{Spin}(2n,1)$.
	\begin{itemize}
		\item[(1)] If $\delta \in(\mathfrak{t}\cap\mathfrak{m}')^*_{\mathbb{C},+}$ is integer, then 
		\begin{equation*}
			\mathrm{Ind}_P^G(T_\delta)\cong
			\widehat{\bigoplus\limits_{\gamma\in \mathcal{D}_0(\delta)}}\mathcal{H}_{\pi_0^+(\gamma)}\oplus\widehat{\bigoplus\limits_{\gamma\in \mathcal{D}_0(\delta)}}\mathcal{H}_{\pi_0^-(\gamma)} \oplus
			\widehat{\bigoplus\limits_{\phi\in \mathcal{C}_0(\delta)}}\int_{(0,\infty)}^\oplus  \mathcal{H}_{\phi,t\lambda_0}d\nu_{\phi}^1(t),
		\end{equation*}
		where
		\begin{equation*}
			d\nu_\phi^1(t)\coloneqq \mathrm{tanh}(t\pi)\frac{t\mathrm{dim}(\phi)}{\pi^{n}\Gamma(n)}{\displaystyle\prod\limits_{i=1}^{n-1}(t^2+(a_i+n-i-\frac{1}{2})^2)}dt.
		\end{equation*}
		\vspace{2mm}
		\item[(2)] If $\delta \in(\mathfrak{t}\cap\mathfrak{m}')^*_{\mathbb{C},+}$ is half-integer, then
		\begin{equation*}
			\mathrm{Ind}_P^G(T_\delta)\cong
			\widehat{\bigoplus\limits_{\gamma\in \mathcal{D}_0(\delta)}}\mathcal{H}_{\pi_0^+(\gamma)}\oplus\widehat{\bigoplus\limits_{\gamma\in \mathcal{D}_0(\delta)}}\mathcal{H}_{\pi_0^-(\gamma)} \oplus
			\widehat{\bigoplus\limits_{\phi\in \mathcal{C}_0(\delta)}}\int_{(0,\infty)}^\oplus  \mathcal{H}_{\phi,t\lambda_0}d\nu_{\phi}^2(t),
		\end{equation*}
		where
		\begin{equation*}
			d\nu_\phi^2(t)\coloneqq \mathrm{coth}(t\pi)\frac{t\mathrm{dim}(\phi)}{\pi^{n}\Gamma(n)}{\displaystyle\prod\limits_{i=1}^{n-1}(t^2+(a_i+n-i-\frac{1}{2})^2)}dt.
		\end{equation*}
	\end{itemize}
\end{proposition}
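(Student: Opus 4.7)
The plan is to apply Proposition \ref{lastdecomp} together with the explicit Plancherel formula \eqref{Plancherel}, and then use the preliminary analysis in Cases I and II to identify the contributing representations.

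First, by Proposition \ref{lastdecomp} we have
\begin{equation*}
\mathrm{Ind}_P^G(T_\delta)\cong\int\limits_{\widehat{G}}^\oplus\chi_{\mathcal{A}_\delta}(\pi)\mathcal{H}_\pi\, dm_G(\pi).
\end{equation*}
Since $m_G$ is the Plancherel measure of $G$, the representations outside $\widehat{G}_{\mathrm{temp}}$ are negligible. By the Plancherel decomposition \eqref{Plancherel} for $G=\mathrm{Spin}(2n,1)$, the tempered dual decomposes (modulo null sets) into the discrete series $\widehat{G}_{\mathrm{ds}}$ and the families of unitary principal series $\{\mathcal{H}_{\phi,t\lambda_0}\}_{\phi\in\widehat{M},\,t\in(0,\infty)}$. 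Thus it suffices to determine which members of each family belong to $\mathcal{A}_\delta$, i.e., which satisfy $\mathrm{Hom}_P(T_\delta,\pi)\neq\{0\}$.

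For the discrete series part, Case I recalled from \cite[Theorem 3.22]{gangoshima} gives the explicit branching $\pi_0^\pm(\gamma)\vert_P$ as a direct sum of representations $\mathrm{Ind}_{M'N}^P(V_{M',\tau}\otimes e^{i\xi_0})$. Since $T_\delta=\mathrm{Ind}_{M'N}^P(F_\delta\otimes e^{i\xi_0})$ and, by Proposition \ref{Pdiscrete}, the representations $\mathrm{Ind}_{M'N}^P(\cdot\otimes e^{i\xi_0})$ are pairwise non-isomorphic discrete series of $P$ parametrized by the highest weight, we see that $\pi_0^\pm(\gamma)\in\mathcal{A}_\delta$ if and only if $\delta$ appears in the branching, which is exactly the condition \eqref{cnd1}, i.e., $\gamma\in\mathcal{D}_0(\delta)$. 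Similarly, for the principal series part, Case II based on \cite[Theorem 3.20]{gangoshima} shows $\mathcal{H}_{\phi,t\lambda_0}\in\mathcal{A}_\delta$ if and only if $\phi\in\mathcal{C}_0(\delta)$; note that $t$ does not enter the branching condition.

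Plugging these characterizations into the Plancherel decomposition gives
\begin{equation*}
\mathrm{Ind}_P^G(T_\delta)\cong\widehat{\bigoplus\limits_{\gamma\in\mathcal{D}_0(\delta)}}\mathcal{H}_{\pi_0^+(\gamma)}\oplus\widehat{\bigoplus\limits_{\gamma\in\mathcal{D}_0(\delta)}}\mathcal{H}_{\pi_0^-(\gamma)}\oplus\widehat{\bigoplus\limits_{\phi\in\mathcal{C}_0(\delta)}}\int_{(0,\infty)}^\oplus\mathcal{H}_{\phi,t\lambda_0}\, d\nu_\phi(t),
\end{equation*}
with $d\nu_\phi$ given by the Plancherel formula recalled from \cite{rankonePlancherel}. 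To complete the split into parts (1) and (2) of the statement, observe that the integrality condition $b_i-a_1\in\mathbb{Z}$ in \eqref{cnd2} forces $\phi\in\mathcal{C}_0(\delta)$ to share the parity of $\delta$: if $\delta$ is integer, then every contributing $\phi$ is integer, so $d\nu_\phi=d\nu_\phi^1$ with the $\tanh$ factor; if $\delta$ is half-integer, every contributing $\phi$ is half-integer, so $d\nu_\phi=d\nu_\phi^2$ with the $\coth$ factor.

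The main obstacle is a bookkeeping one rather than a conceptual one: one must be careful that the identification of $\mathcal{A}_\delta$ uses the uniqueness part of Proposition \ref{Pdiscrete}, so that the decomposition of $\pi\vert_P$ into distinct discrete series of $P$ (indexed by the $M'$-highest weight $\tau$) really detects $\delta$, and that the multiplicity $n(x_\delta,y)$ in Anh--Mackey reciprocity is at most one in all these cases, which follows from the multiplicity-one branching statements of \cite{gangoshima}. Once this is in place, the two cases of the proposition follow by substituting the explicit formulas for $d\nu_\phi^1$ and $d\nu_\phi^2$.
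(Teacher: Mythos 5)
Your proof is correct and takes essentially the same route as the paper, whose own proof is a one-line assertion that the proposition is a ``direct consequence of Proposition \ref{lastdecomp}, the explicit Plancherel formula \eqref{Plancherel} for $\mathrm{Spin}(2n,1)$ and the above branching laws.'' You simply spell out the bookkeeping -- in particular the parity observation that condition \eqref{cnd2} forces $\phi$ to share the integer/half-integer type of $\delta$, which selects the $\tanh$ versus $\coth$ Plancherel density -- a point the paper leaves implicit.
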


\begin{proof}
It is a direct consequence of Proposition \ref{lastdecomp}, the explicit Plancherel formula \eqref{Plancherel} for $\mathrm{Spin}(2n,1)$ and the above branching laws.
\end{proof}

Assume now that $G=\mathrm{Spin}(2n-1,1)$. This
case is similar and simpler than the previous one. We have $M'\coloneqq \mathrm{Spin}(2(n-2)+1)$ and fix an integral dominant element $\delta=(b_1,\ldots,b_{n-2})\in (\mathfrak{t}\cap \mathfrak{m}')^*_{\mathbb{C},+}$, i.e.,
\begin{equation*}
	b_1\geq \ldots \geq b_{n-3}\geq   b_{n-2}\geq 0
\end{equation*}
with all $b_i$'s being integers or all half-integers. 
Let 
\begin{equation*}
	\gamma=(a_1+n-2,a_{2}+n-3,\ldots, a_{n-1},a_n)
\end{equation*}
with $a_1\geq a_2\geq\ldots\geq a_{n-1}\geq 0$, $a_1,\ldots,a_{n-1}$ being all integers or all half-integers, and $a_n\in i\mathbb{R}$. Set
\begin{equation*}
\mu\coloneqq (a_1,\ldots,a_{n-1})
\end{equation*}
and 
\begin{equation*}
\nu\coloneqq a_n\lambda_0.
\end{equation*}
The representation
\begin{equation*}
	\pi(\gamma)\coloneqq \mathrm{Ind}_{MA\bar{N}}^G(V_{M,\mu}\otimes e^{\nu-\rho'}\otimes1_{\bar{N}}) 
\end{equation*}
is a unitary principal series representation of $G$.
According to \cite[Theorem 3.23]{gangoshima},
\begin{equation*}
	\pi(\gamma)\vert_{ P}\cong \bigoplus\limits_{\tau}\mathrm{Ind}_{M'N}^P(V_{M',\tau}\otimes e^{i\xi_0})
\end{equation*}
where $\tau=(c_1,\ldots,c_{n-2})$ runs over tuples such that
\begin{equation*}
	a_1\geq c_1\geq a_2\geq c_2\geq \ldots\geq a_{n-2}\geq c_{n-2}\geq \vert a_{n-1}\vert
\end{equation*}
and $c_i-a_1\in\mathbb{Z}$. Hence $\pi(\gamma)$ contributes to $\mathcal{A}_\delta$ if and only if 
\begin{equation}\label{cnd3}
	\begin{array}{lc}
		&a_1\geq b_1\geq a_2\geq b_2\geq \ldots\geq a_{n-2}\geq b_{n-2}\geq \vert a_{n-1}\vert\\
		\text{ and } & b_i-a_1\in\mathbb{Z}.
	\end{array}
\end{equation}
Denote by $\mathcal{C}_1(\delta)$ the set of all $(n-1)$-tuples $(a_1,\ldots ,a_{n-1})$ satisfying the above condition \eqref{cnd3}.


As a direct consequence of Proposition \ref{lastdecomp}, the explicit Plancherel formula \eqref{Plancherel} for $\mathrm{Spin}(2n-1,1)$ and the above branching laws, we obtain the following proposition.

\begin{proposition}\label{thisprop'}
Let $G=\mathrm{Spin}(2n-1,1)$. Then 
\begin{equation*}
	\mathrm{Ind}_P^G(T_\delta)\cong
	\widehat{\bigoplus\limits_{\phi\in \mathcal{C}_1(\delta)}}\int_{(0,\infty)}^\oplus  \mathcal{H}_{\phi,t\lambda_0}d\nu_\phi(t),
\end{equation*}
where 
\begin{equation*}
	d\nu_\phi(t)\coloneqq \frac{\mathrm{dim}(\phi)}{\pi^{n-\frac{1}{2}}\Gamma(n-\frac{1}{2})}{\displaystyle\prod\limits_{i=1}^{n-1}(t^2+(a_i+n-i-1)^2)}dt.
\end{equation*}
\end{proposition}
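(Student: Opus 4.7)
The proof is essentially a direct assembly of three ingredients already in place. My plan is to combine Proposition \ref{lastdecomp} with the Plancherel decomposition \eqref{Plancherel} specialized to the odd case $G=\mathrm{Spin}(2n-1,1)$, and then identify the support of the resulting integral via the branching condition \eqref{cnd3}.

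First, I would invoke Proposition \ref{lastdecomp} to write
\begin{equation*}
\mathrm{Ind}_P^G(T_\delta)\cong \int_{\widehat{G}}^\oplus \chi_{\mathcal{A}_\delta}(\pi)\mathcal{H}_\pi\, dm_G(\pi).
\end{equation*}
Next, I would use the fact that, in the odd rank-one case, the Plancherel measure $m_G$ is supported only on the unitary principal series $\{\mathcal{H}_{\phi,t\lambda_0}\}_{\phi\in\widehat{M},\,t>0}$ (there are no discrete series representations for $\mathrm{Spin}(2n-1,1)$, as noted right after \eqref{Plancherel}). In particular, the integral above reduces to an integral over the support of $m_G$, with density $d\nu_\phi(t)$ as recalled in the excerpt.

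The heart of the matter is to identify, for each $\phi\in\widehat{M}$, the set of parameters $t>0$ for which $\mathcal{H}_{\phi,t\lambda_0}\in\mathcal{A}_\delta$. By the discussion immediately preceding the statement, this translates, through \cite[Theorem 3.23]{gangoshima}, into condition \eqref{cnd3} on the highest weight $(a_1,\dots,a_{n-1})$ of $\phi$ together with the purely imaginary parameter $a_n=it$. Since this condition depends only on $\phi$ (the imaginary parameter $a_n$ plays no role in the inequalities of \eqref{cnd3}), the characteristic function $\chi_{\mathcal{A}_\delta}(\mathcal{H}_{\phi,t\lambda_0})$ equals $\chi_{\mathcal{C}_1(\delta)}(\phi)$ for almost all $t$.

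Substituting this back, the integral over $\widehat{G}$ decomposes as a discrete sum over those $\phi\in\mathcal{C}_1(\delta)$ together with a continuous integral in $t$ against $d\nu_\phi(t)$, yielding precisely the claimed formula. There is no genuine obstacle; the only real task is to confirm that the measure-theoretic identification of $\mathcal{A}_\delta$ with $\mathcal{C}_1(\delta)\times(0,\infty)$ respects the direct integral structure, which follows since $\mathcal{A}_\delta$ differs from the Plancherel support only on a $m_G$-null set (the complement of tempered principal series).
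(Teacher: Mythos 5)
Your proposal is correct and follows the same route as the paper, which dispatches this proposition as a direct consequence of Proposition \ref{lastdecomp}, the Plancherel decomposition \eqref{Plancherel} specialized to $\mathrm{Spin}(2n-1,1)$ (where there are no discrete series), and the branching law from \cite[Theorem 3.23]{gangoshima} encoded in condition \eqref{cnd3}. You simply spell out the assembly — in particular the observation that \eqref{cnd3} constrains only the $M$-parameter $\phi=(a_1,\dots,a_{n-1})$ and not the continuous parameter $t$ — which is exactly what the paper's one-line proof leaves implicit.
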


Now we are ready to present the main result of the article.

\begin{theorem}\label{mainresult} Let $G\coloneqq \mathrm{Spin}(m+1,1)$. Let $\pi_1$ be a unitary principal series representation
	\begin{equation*}
		\pi_1=\mathrm{Ind}_P^G(\sigma\otimes e^{\nu}\otimes 1)
	\end{equation*}
where $(\sigma_1,\ldots,\sigma_{[\frac{m}{2}]})$ is the highest weight of the irreducible 
$M$-representation $\sigma$. If $m$ is odd, let
 	$\mathcal{B}$ be the set of all $\beta=(\beta_1,\ldots,\beta_{\frac{m-1}{2}})\in(\mathfrak{t}\cap\mathfrak{m}')^*$ such that
\begin{itemize}
	\item[(1)] $\beta$ is integer if $\sigma$ is integer or $\beta$ is half-integer if $\sigma$ is half-integer, and 
	\item[(2)] 
	$	\sigma_1\geq \beta_1\geq\sigma_2\geq \ldots \geq \sigma_{\frac{m-3}{2}}\geq \beta_{\frac{m-3}{2}}\geq \sigma_{\frac{m-1}{2}}\geq \vert\beta_{\frac{m-1}{2}}\vert$.
\end{itemize}
If $m$ is even, let $\mathcal{B}$ be the set of all 
$\beta=(\beta_1,\ldots,\beta_{\frac{m-2}{2}})\in(\mathfrak{t}\cap\mathfrak{m}')^*$ such that
\begin{itemize}
	\item[(1)] $\beta$ is integer if $\sigma$ is integer or $\beta$ is half-integer if $\sigma$ is half-integer, and 
	\item[(2)]
	$
	\sigma_1\geq \beta_1\geq\sigma_2\geq \ldots \geq \sigma_{\frac{m-2}{2}}\geq \beta_{\frac{m-2}{2}}\geq \vert\sigma_{\frac{m}{2}}\vert.
	$
\end{itemize}
Let $\pi_2$ be any nontrivial unitary irreducible representation of $G$. Decompose $\pi_2$ into
	\begin{equation*} \pi_2\vert_{P}\cong\bigoplus\limits_{j=1}^k\mathrm{Ind}_{M'N}^P(\tau_j\otimes \xi_0)
	\end{equation*}
according to \cite{gangoshima} and let $\gamma_j\in (\mathfrak{t}\cap\mathfrak{m}')^*_{\mathbb{C},+}$ stand for the highest weight of $\tau_j$. For every $\delta \in(\mathfrak{t}\cap\mathfrak{m}')^*_{\mathbb{C},+}$, set 
\begin{equation*}
	n^j_\beta(\delta)\coloneqq \sum\limits_{v,w\in W}\varepsilon(v)\varepsilon(w)\mathcal{P}(v(\gamma_j+\rho)-w(\delta+\rho)+\beta),
\end{equation*}
where $\varepsilon(v)=(-1)^{\vert v\vert}$ and $\mathcal{P}(\cdot)$ stands for the Kostant partition function.
Then	
\begin{equation*}
	\pi_1\widehat{\otimes}\pi_2 \cong\bigoplus\limits_{j=1}^k\bigoplus\limits_{\beta\in\mathcal{B}}\bigoplus\limits_{\delta\in(\mathfrak{t}\cap\mathfrak{m}')^*_{\mathbb{C},+}}
	n_\beta ^j(\delta)\mathrm{Ind}_P^G(T_\delta)
	\end{equation*}
	where for every $\delta\in (\mathfrak{t}\cap\mathfrak{m}')^*_{\mathbb{C},+}$,
		\begin{equation*}
			T_\delta\coloneqq  \mathrm{Ind}_{M'N}^P(F_\delta\otimes e^{i\xi_0}),
	\end{equation*}
with $F_\delta$ being the finite-dimensional irreducible representation of $M'$ of highest weight $\delta$.
	
	\textbf{(A)} If $m$ is odd, set $n\coloneqq \frac{m+1}{2}$. Let $\mathcal{D}_0(\delta)$ be the set of $n$-tuples $(a_1,\ldots,a_n)$ and $\mathcal{C}_0(\delta)$ the set of $(n-1)$-tuples $(a_1',\ldots a_{n-1}')$ satisfying condition \eqref{cnd1} and \eqref{cnd2} respectively. 
	 \begin{itemize}
	 	\item[(1)] If $\delta$ is integer, then
	 	\begin{equation*}
	 		\mathrm{Ind}_P^G(T_\delta)\cong
	 	 \widehat{\bigoplus\limits_{\gamma\in \mathcal{D}_0(\delta)}}\mathcal{H}_{\pi_0^+(\gamma)}\oplus\widehat{\bigoplus\limits_{\gamma\in \mathcal{D}_0(\delta)}}\mathcal{H}_{\pi_0^-(\gamma)} \oplus\\
	 	\widehat{\bigoplus\limits_{\phi\in \mathcal{C}_0(\delta)}}\int_{(0,\infty)}^\oplus \mathcal{H}_{\phi,t\lambda_0}d\nu_{\phi}^1(t)
	 	\end{equation*}
 with
\begin{equation*}
d\nu_\phi^1(t)\coloneqq \mathrm{tanh}(t\pi)\frac{t\mathrm{dim}(\phi)}{\pi^{n}\Gamma(n)}{\displaystyle\prod\limits_{i=1}^{n-1}(t^2+(a_i'+n-i+\frac{1}{2})^2)}dt.
\end{equation*}

\item[(2)] If $\delta$ is half-integer, then
\begin{equation*}
	\mathrm{Ind}_P^G(T_\delta)\cong
	\widehat{\bigoplus\limits_{\gamma\in \mathcal{D}_0(\delta)}}\mathcal{H}_{\pi_0^+(\gamma)}\oplus\widehat{\bigoplus\limits_{\gamma\in \mathcal{D}_0(\delta)}}\mathcal{H}_{\pi_0^-(\gamma)} \oplus\\
	\widehat{\bigoplus\limits_{\phi\in \mathcal{C}_0(\delta)}}\int_{(0,\infty)}^\oplus \mathcal{H}_{\phi,t\lambda_0}d\nu_{\phi}^2(t)
\end{equation*}
with
\begin{equation*}
d\nu_\phi^2(t)\coloneqq \mathrm{coth}(t\pi)\frac{t\mathrm{dim}(\phi)}{\pi^{n}\Gamma(n)}{\displaystyle\prod\limits_{i=1}^{n-1}(t^2+(a_i'+n-i+\frac{1}{2})^2)}dt.
\end{equation*}
\end{itemize}

\textbf{(B)} If $m$ is even, set $n\coloneqq \frac{m+2}{2}$. Let $\mathcal{C}_1(\delta)$ be the set of $(n-1)$-tuples $(a_1',\ldots, a_{n-1}')$ satisfying condition \eqref{cnd3} and having the same parity with $\delta$. Then
		\begin{equation*}
		\mathrm{Ind}_P^G(T_\delta)\cong\bigg\{\widehat{\bigoplus\limits_{\phi\in \mathcal{C}_1(\delta)}}\int_{(0,\infty)}^\oplus  \mathcal{H}_{\phi,t\lambda_0}d\nu_\phi(t)\bigg\}
\end{equation*}
with
\begin{equation*}
	d\nu_\phi(t)\coloneqq \frac{\mathrm{dim}(\phi)}{\pi^{n-\frac{1}{2}}\Gamma(n-\frac{1}{2})}{\displaystyle\prod\limits_{i=1}^{n-1}(t^2+(a_i'+n-i-1)^2)}dt.
\end{equation*}
\end{theorem}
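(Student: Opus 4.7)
The plan is essentially to collect the chain of reductions already carried out earlier in the section and verify that they assemble into the stated decomposition. First, I would apply Lemma~\ref{lemmatensor} to rewrite
\begin{equation*}
\pi_1 \widehat{\otimes} \pi_2 \cong \mathrm{Ind}_P^G\bigl((\sigma \otimes e^\nu \otimes 1) \widehat{\otimes} (\pi_2)_{\vert P}\bigr),
\end{equation*}
which converts the tensor product problem into an internal tensor product problem inside~$P$. Then, using the explicit branching law $\pi_2|_P \cong \bigoplus_{j=1}^k T_j$ from \cite{gangoshima} (as recorded in~\eqref{Pdecomposition}), and applying Mackey's tensor product theorem \cite{mackeyannals1} a second time with $G' = P$, $H_1 = P$, $H_2 = M'N$, I would obtain the step displayed in~\eqref{stop2}.

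Next, I would invoke Goodman's branching rules \cite{goodman} to restrict $\sigma|_{M'} \cong \bigoplus_{\beta \in \mathcal{B}} F_\beta$, with the interlacing conditions on $\mathcal{B}$ depending on the parity of~$m$ (cases distinguished exactly as in~\eqref{finiterestr}). Combined with Kostant's multiplicity formula \cite[Corollary~3.6]{kumar} to compute the finite-dimensional tensor products $F_\beta \otimes \tau_j = \bigoplus_\delta n_\beta^j(\delta) F_\delta$, this yields the isomorphism~\eqref{thistensor2}:
\begin{equation*}
\pi_1 \widehat{\otimes} \pi_2 \cong \bigoplus_{j=1}^k \bigoplus_{\beta \in \mathcal{B}} \bigoplus_{\delta \in (\mathfrak{t} \cap \mathfrak{m}')^*_{\mathbb{C},+}} n_\beta^j(\delta)\, \mathrm{Ind}_P^G(T_\delta).
\end{equation*}
This reduces the problem to decomposing each $\mathrm{Ind}_P^G(T_\delta)$ as a direct integral of irreducible unitary $G$-representations.

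For that final step I would appeal to Proposition~\ref{lastdecomp}, which via Anh–Mackey Frobenius reciprocity \cite[Corollary~1.10]{anh} identifies $\mathrm{Ind}_P^G(T_\delta)$ with the integral over $\widehat{G}$ of those $\pi$ whose $P$-restriction contains $T_\delta$. By the Plancherel formula~\eqref{Plancherel}, this set is determined (up to $m_G$-measure zero) by the discrete series and unitary principal series of~$G$. I would then read off from \cite[Theorems 3.20, 3.22, 3.23]{gangoshima} precisely which such representations contribute, getting the conditions \eqref{cnd1}, \eqref{cnd2}, \eqref{cnd3}, and so identifying the contributing families as $\mathcal{D}_0(\delta)$, $\mathcal{C}_0(\delta)$, $\mathcal{C}_1(\delta)$. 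Combining with the explicit Plancherel measure $d\nu_\phi(t)$ from \cite{rankonePlancherel} recalled above (with the $\tanh$ versus $\coth$ dichotomy in the even case controlled by the parity of~$\phi$, which is forced by the parity of~$\delta$), yields Propositions~\ref{thisprop} and~\ref{thisprop'}, which are exactly the statements~(A)(1), (A)(2), and (B) of the theorem.

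The main obstacle is bookkeeping rather than conceptual: one must keep track of the parities (integer versus half-integer) throughout the chain of restrictions $G \to P \to M'N$ and tensor products, since these determine which Plancherel density appears in the final integral and which intertwining conditions select the contributing $\mathcal{D}_0(\delta)$, $\mathcal{C}_i(\delta)$. Everything else is a direct concatenation of Lemma~\ref{lemmatensor}, Mackey tensor product, Goodman/Kostant branching, Proposition~\ref{lastdecomp}, and Propositions~\ref{thisprop}–\ref{thisprop'}.
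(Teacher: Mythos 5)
Your proposal reproduces exactly the chain of reductions in the paper: Lemma~\ref{lemmatensor}, the $P$-branching from \cite{gangoshima}, the second application of Mackey's tensor-product theorem giving~\eqref{stop2}, Goodman's restriction~\eqref{finiterestr}, Kostant's multiplicity formula~\eqref{partition} leading to~\eqref{thistensor2}, and finally Proposition~\ref{lastdecomp} together with the Plancherel formula to obtain Propositions~\ref{thisprop} and~\ref{thisprop'}. The paper's own proof is a one-line citation of~\eqref{thistensor2} and Propositions~\ref{thisprop}--\ref{thisprop'}; you have correctly unwound that citation into the same argument, with the parity bookkeeping handled as the paper does.
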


\begin{proof}
	The statement is a direct consequence of formula \eqref{thistensor2} and Propositions \ref{thisprop} and \ref{thisprop'}.
\end{proof}

 \begin{appendices}
	\begin{center}\section*{APPENDIX}
		\end{center}

In this short Appendix, for the sake of completeness, we include the branching rules for (most of) the complementary series of $\mathrm{Spin}(m+1,1)$ to the parabolic $P$, as obtained in \cite{gangoshima}. For the branching rules of the remaining unitary irreducible representations of $\mathrm{Spin}(m+1,1)$, we refer the interested reader to \cite[Section 3]{gangoshima}.

Assume that $G\coloneqq \mathrm{Spin}(2n,1)$. 
Let 
\begin{equation*}
	\gamma=(a_1+n-\frac{3}{2},\ldots,a_{n-1}+\frac{1}{2},a_n)
\end{equation*}
with $a_1\geq a_2\geq\ldots\geq a_{n-1}\geq 0$, $a_1,\ldots,a_{n-1}$ being all integers, $a_n\in\mathbb{R}$, $\vert a_n\vert<n-\frac{1}{2}$, and $a_j=0$ for any $n-\vert a_n\vert-\frac{1}{2}<j\leq n-1$. Set
\begin{equation*}
	\mu\coloneqq (a_1,\ldots,a_{n-1})
\end{equation*}
and 
\begin{equation*}
	\nu\coloneqq a_n\lambda_0.
\end{equation*}
The representation
\begin{equation*}
	\pi(\gamma)\coloneqq \mathrm{Ind}_{MA\bar{N}}^G(V_{M,\mu}\otimes e^{\nu-\rho'}\otimes1_{\bar{N}}) 
\end{equation*}
is a complementary series representation of $G$.
According to \cite[Theorem 3.20]{gangoshima},
\begin{equation*}
	\pi(\gamma)\vert_{ P}\cong \bigoplus\limits_{\tau}\mathrm{Ind}_{M'N}^P(V_{M',\tau}\otimes e^{i\xi_0})
\end{equation*}
where $\tau=(c_1,\ldots,c_{n-1})$ runs over tuples such that
\begin{equation*}
	a_1\geq c_1\geq a_2\geq c_2\geq \ldots\geq a_{n-1}\geq \vert c_{n-1}\vert
\end{equation*}
and $c_i-a_1\in\mathbb{Z}$.

Assume that $G\coloneqq \mathrm{Spin}(2n-1,1)$. Let
\begin{equation*}
	\gamma=(a_1+n-2,\ldots, a_{n-1},a_n)
\end{equation*} 
with $a_1\geq a_2\geq\ldots\geq a_{n-1}\geq 0$, $a_1,\ldots,a_{n-1}$ being all integers, $a_n\in\mathbb{R}$, $\vert a_n\vert<n-1$, and $a_j=0$ for any $n-\vert a_n\vert-1<j\leq n-1$. Set
\begin{equation*}
	\mu\coloneqq (a_1,\ldots,a_{n-1})
\end{equation*}
and 
\begin{equation*}
	\nu\coloneqq a_n\lambda_0.
\end{equation*}
The representation 
\begin{equation*}
	\pi(\gamma)\coloneqq \mathrm{Ind}_{MA\bar{N}}^G(V_{M,\mu}\otimes e^{\nu-\rho'}\otimes1_{\bar{N}})
\end{equation*}
is a complementary series representation. According to \cite[Theorem 3.23]{gangoshima},
\begin{equation*}
	\pi(\gamma)\vert_{ P}\cong \bigoplus\limits_{\tau}\mathrm{Ind}_{M'N}^P(V_{M',\tau}\otimes e^{i\xi_0}),
\end{equation*}
where $\tau=(c_1,\ldots,c_{n-2})$ runs over tuples such that 
\begin{equation*}
	a_1\geq c_1\geq a_2\geq c_2\geq \ldots\geq c_{n-2}\geq  \vert a_{n-1}\vert 
\end{equation*}
and $c_i-a_1\in\mathbb{Z}$.

\end{appendices}


\begin{thebibliography}{99999999999}
	\bibitem[\textbf{ABF24}]{arends} C. Arends, F. Bang-Jensen \& J. Frahm, \emph {Explicit Hilbert spaces for the unitary dual of rank one orthogonal groups and applications}, (2024) arXiv: 2406.11349, \url{https://arxiv.org/abs/2406.11349}.
	
	\bibitem[\textbf{Anh71}]{anh} N. H. Anh, \emph{Restriction of the principal series of 
	$\mathrm{SL}(n,\mathbb{C})$ to some reductive subgroups}, {Pacific J. Math.}  \textbf{38}, No. 2, pp. 295-314 (1971).
	
	\bibitem[\textbf{Atl}]{atlas} Atlas of Lie Groups and Representations, software, \url{http://www.liegroups.org/}.
	
	\bibitem[\textbf{BIK23}]{benoist23} Y. Benoist, Y. Inoue \& T. Kobayashi, \emph{Temperedness criterion of the tensor product of parabolic induction for $\mathrm{GL}_n$}, {J. Algebra} \textbf{617}, pp. 1-16 (2023).
	
	\bibitem[\textbf{Dix57}]{DixmierVonN} J. Dixmier, \emph{Les alg{\`e}bres d'op{\'e}rateurs dans l'espace hilbertien (alg{\`e}bres de von {Neumann})}, Cahiers Scientifiques. Fasc. XXV. Paris: Gauthier-Villars. VI, 367 p. (1957).
	
	
	\bibitem[\textbf{Dix69}]{dixmiertype1} J. Dixmier, \emph{Sur la repr\' esentation r\' eguli\`ere d'un groupe localement compact connexe}, Ann. Sci. \' Ec. Norm. Sup\' er. \textbf{4}, No. 2, pp. 423-436 (1969).
	
	\bibitem[\textbf{DM99}]{vandijk} G. Van Dijk \& V. F. Molchanov, \emph{Tensor products of maximal degenerate series representations of the group $\mathrm{SL}(n,\mathbb{R})$}, J. Math. Pures Appl. \textbf{78}, No. 1, pp. 99-119 (1999).
	
	\bibitem[\textbf{GW09}]{goodman} R. Goodman \& N. R. Wallach, \emph{Symmetry, representations and invariants}, 1st ed. Graduate Texts in Mathematics \textbf{255}, 
	Spinger-Verlag New York, (2009).
	

	\bibitem[\textbf{Kir76}]{kirillovnew} A. A. Kirillov, \emph{Elements of the theory of representations}, Vol. Band \textbf{220}, Grundlehren der Mathematischen Wissenschaften, Spirnger-Verlag, Berlin-New York, pp. xi+315 (1976). 
	
	\bibitem[\textbf{KLW77}]{wolfpar} F. W. Keene, R. L. Lipsman \& J. A. Wolf, \emph{The Plancherel formula for parabolic subgroups}, Isr. J. Math. \textbf{28}, 68-90 (1977).
	
	\bibitem[\textbf{Kna96}]{Knapp1} A. W. Knapp, \emph{Lie groups Beoynd an Introduction}, 2nd. Progress in Mathematics \textbf{140}, Birkh\" auser Boston (1996).
	
	\bibitem[\textbf{Kob94}]{koba1} T. Kobayashi, \emph{Discrete decomposability of restriction of $A_{\mathfrak q}(\lambda)$ with respect to reductive subgroups and its applications}, Invent. Math. \textbf{117}, No. 2, pp. 181-205 (1994).
	
	\bibitem[\textbf{Kob98a}]{koba2} T. Kobayashi, \emph{Discrete decomposability of the restriction of $A_{\mathfrak{q}}(\lambda)$ with respect to reductive subgroups. III: Restriction of Harish-Chandra modules and associated varieties}, Invent. Math. \textbf{131}, No. 2, pp. 229-256, (1998).
	
	\bibitem[\textbf{Kob98b}]{koba3} T. Kobayashi, \emph{Discrete decomposability of the restriction of $A_{\mathfrak{q}}(\lambda)$ with respect to reductive subgroups. II: Micro-local analysis and asymptotic $K$-support}, 	Ann. Math. (2) \textbf{147}, No. 3, pp. 709-729 (1998).
	
		\bibitem[\textbf{Kob14}]{kobaShi} T. Kobayashi, \emph{Shintani Functions, Real Spherical Manifolds, and Symmetry Breaking Operators}, Developments and Retrospectives in Lie Theory: Geometric and Analytic Methods. Spinger International Publishing, pp. 127-159 (2014).
	
	\bibitem[\textbf{Kob23}]{kobaLieTheory} T. Kobayashi, \emph{Bounded multiplicity branching for symmetric pairs}, J. Lie Theory \textbf{33}, No. 1, pp. 305-328 (2023).
	
	\bibitem[\textbf{Kob24}]{kobayashiRecent} T. Kobayashi, \emph{Recent advances in branching problems of representations}, Sugaku Expositions \textbf{37}, No. 2, pp. 129-177 (2024).
	
	\bibitem[\textbf{KS71}]{knappstein1} A. W. Knapp \& E. M. Stein, \emph{Intertwining operators for semisimple groups}, Ann. Math. (2) \textbf{93}, pp. 489-578 (1971).
	
	\bibitem[\textbf{Kum10}]{kumar} S. Kumar, \emph{Tensor product decomposition}, Proceedings of the International Congress of Mathematicians, Volume III, Hindustan Book Agency, New Delhi, pp. 1126-1261 (2010).
	
	\bibitem[\textbf{Lee94}]{Liesoft} M. Van Leeuwen, \emph{LiE, a software package for Lie group computations}, (1994), \url{https://api.semanticscholar.org/CorpusID:55853316}.
	
	\bibitem[\textbf{LOY23}]{gangoshima} G. Liu, Y. Oshima \& J. Yu, \emph{Restriction of irreducible unitary representations of $\mathrm{Spin}(N,1)$ to parabolic subgroups}, Represent. Theory \textbf{27}, pp. 887-932 (2023).
	
	\bibitem[\textbf{Mac52}]{mackeyannals1} G. W. Mackey, \emph{Induced representations of locally compact groups. I}, Ann. Math. (2) \textbf{55}, pp. 101-139 (1952).
	
	\bibitem[\textbf{Mac58}]{mackeylittlegroup} G. W. Mackey, \emph{Unitary representations of group extensions. I}, Acta Math. \textbf{99}, pp. 265-311 (1958).
	
	\bibitem[\textbf{Mar78}]{martinSL} R. P. Martin, \emph{Tensor products for $\mathrm{SL}(2,k)$}, Trans. Am. Math. Soc. \textbf{239}, pp. 197-211 (1978).
	
	\bibitem[\textbf{Mar84}]{martin} R. P. Martin, \emph{Tensor products for the Desitter group}, Trans. Am. Math. Soc. \textbf{284}, No. 2, pp. 795-814 (1984).

	\bibitem[\textbf{Mia79}]{rankonePlancherel} R. J. Miatello, \emph{On the Plancherel measure for linear Lie groups of rank one}, Manuscr. Math. \textbf{29}, pp. 249-276 (1979).
	
	\bibitem[\textbf{Rep78}]{repka} J. Repka, \emph{Tensor products of unitary representations of $\mathrm{SL}_2(\mathbb{R})$}, Am. J. Math. \textbf{100}, pp. 747-774 (1978).
	
	
	
	
	\bibitem[\textbf{Sch24}]{walkschaub} V. Schaub, \emph{A walk through $\mathrm{Spin}(1,d+1)$}, (2024) arXiv: 2405.01659,  \url{https://arxiv.org/abs/2405.01659}.
	

	
\end{thebibliography}
\end{document}